\documentclass[10pt,a4paper]{amsart}[2000]
\usepackage{geometry}
\usepackage{comment}
\usepackage{amsmath}
\usepackage{amssymb}
\usepackage{amsthm}
\usepackage{mathrsfs}
\usepackage[hang]{footmisc}

\usepackage[linktocpage]{hyperref}
\usepackage{tikz, tikz-cd}
\usepackage{color}
\usepackage{mathtools}
\usepackage{enumitem}
\usepackage{stackengine}

\usepackage{graphicx}
\usepackage{esvect}
\makeatletter
\DeclareRobustCommand{\cev}[1]{%
  {\mathpalette\do@cev{#1}}%
}
\newcommand{\do@cev}[2]{%
  \vbox{\offinterlineskip
    \sbox\z@{$\m@th#1 x$}%
    \ialign{##\cr
      \hidewidth\reflectbox{$\m@th#1\vec{}\mkern4mu$}\hidewidth\cr
      \noalign{\kern-\ht\z@}
      $\m@th#1#2$\cr
    }%
  }%
}
\makeatother

\thanks{}

\allowdisplaybreaks

\theoremstyle{plain}

\newtheorem{Thm}{Theorem}[section]

\theoremstyle{definition}

\theoremstyle{plain}
\newtheorem{thm}[Thm]{Theorem}

\newtheorem{cor}[Thm]{Corollary}
\newtheorem{prop}[Thm]{Proposition}

\theoremstyle{definition}
\newtheorem{defn}[Thm]{Definition}

\newtheorem{eg}[Thm]{Example}
\newtheorem{rmk}[Thm]{Remark}

\newtheorem{innercustomthm}{Theorem}
\newenvironment{customthm}[1]
{\renewcommand\theinnercustomthm{#1}\innercustomthm}
{\endinnercustomthm}

\newtheorem{innercustomcor}{Corollary}
\newenvironment{customcor}[1]
{\renewcommand\theinnercustomcor{#1}\innercustomcor}
{\endinnercustomcor}

\newcommand{\B}{B}
\newcommand{\A}{A}
\newcommand{\J}{J}
\newcommand{\K}{\mathcal{K}}
\newcommand{\D}{D}
\newcommand{\Ch}{D}
\newcommand{\Zh}{\mathcal{Z}}
\newcommand{\E}{E}
\newcommand{\Oh}{\mathcal{O}}

\newcommand{\T}{{\mathbb T}}

\newcommand{\N}{{\mathbb N}}
\newcommand{\Z}{{\mathbb Z}}
\newcommand{\C}{{\mathbb C}}
\newcommand{\Q}{{\mathbb Q}}

\newcommand{\aut}{\mathrm{Aut}}
\newcommand{\supp}{\mathrm{supp}}

\newcommand{\eps}{\varepsilon}
\numberwithin{equation}{section}

\newcommand{\id}{\mathrm{id}}

\newcommand{\halpha}{\widehat{\alpha}}
\newcommand{\calpha}{\widehat{\alpha}}

\newcommand{\tih}{\widetilde {h}}

\newcommand\set[1]{\left\{#1\right\}}  
\newcommand\mset[1]{\left\{\!\!\left\{#1\right\}\!\!\right\}}

\newcommand{\CA}[0]{\mathcal{A}} \newcommand{\CB}[0]{\mathcal{B}}
\newcommand{\CC}[0]{\mathcal{C}} \newcommand{\CD}[0]{\mathcal{D}}
 
\newcommand{\CG}[0]{\mathcal{G}} \newcommand{\CH}[0]{\mathcal{H}}

 \newcommand{\CP}[0]{\mathcal{P}}
\newcommand{\CQ}[0]{\mathcal{Q}} \newcommand{\CR}[0]{\mathcal{R}}
 \newcommand{\CT}[0]{\mathcal{T}}
\newcommand{\CU}[0]{\mathcal{U}} 
 
 \newcommand{\CZ}[0]{\mathcal{Z}}

\newcommand{\Ra}[0]{\Rightarrow}
\newcommand{\La}[0]{\Leftarrow}
\newcommand{\LRa}[0]{\Leftrightarrow}

\newcommand{\quer}[0]{\overline}
\newcommand{\eins}[0]{\mathbf{1}}			
\newcommand{\diag}[0]{\operatorname{diag}}
\newcommand{\ad}[0]{\operatorname{Ad}}
\newcommand{\ev}[0]{\operatorname{ev}}
\newcommand{\fin}[0]{{\subset\!\!\!\subset}}
\newcommand{\diam}[0]{\operatorname{diam}}
\newcommand{\Hom}[0]{\operatorname{Hom}}
\newcommand{\dst}[0]{\displaystyle}
\newcommand{\spp}[0]{\operatorname{supp}}
\newcommand{\lsc}[0]{\operatorname{Lsc}}
\newcommand{\del}[0]{\partial}
\newcommand{\GU}[0]{\CG^{(0)}}

\newtheorem{theorem}[Thm]{Theorem}
\newtheorem{lemma}[Thm]{Lemma}

\theoremstyle{definition}

\numberwithin{equation}{Thm}

\title[]{Groupoid strict comparison for Ample Groupoids:\\
Fiberwise Supramenability and Topological Amenability}	

\begin{document}
\global\long\def\floorstar#1{\lfloor#1\rfloor}
\global\long\def\ceilstar#1{\lceil#1\rceil}	

\global\long\def\B{B}
\global\long\def\A{A}
\global\long\def\J{J}
\global\long\def\K{\mathcal{K}}
\global\long\def\D{D}
\global\long\def\Ch{D}
\global\long\def\Zh{\mathcal{Z}}
\global\long\def\E{E}
\global\long\def\Oh{\mathcal{O}}

\global\long\def\T{{\mathbb{T}}}
\global\long\def\BR{{\mathbb{R}}}
\global\long\def\N{{\mathbb{N}}}
\global\long\def\Z{{\mathbb{Z}}}
\global\long\def\C{{\mathbb{C}}}
\global\long\def\Q{{\mathbb{Q}}}

\global\long\def\aut{\mathrm{Aut}}
\global\long\def\supp{\mathrm{supp}}

\global\long\def\eps{\varepsilon}

\newcommand{\dom}{\operatorname{dom}}
\newcommand{\ran}{\operatorname{ran}}

\newcommand{\Prob}{\operatorname{Prob}}

\newcommand{\clop}{\operatorname{Clop}}
\newcommand{\st}{\mathrm{st}}

\newcommand{\UFS}{\textup{(UFS)}}
\newcommand{\USG}{\textup{(USG)}}
\newcommand{\card}[1]{\lvert#1\rvert}

\global\long\def\id{\mathrm{id}}

\global\long\def\halpha{\widehat{\alpha}}
\global\long\def\calpha{\widehat{\alpha}}

\global\long\def\tih{\widetilde{h}}

\global\long\def\opFol{\operatorname{F{\o}l}}

\global\long\def\opRange{\operatorname{Range}}

\global\long\def\opIso{\operatorname{Iso}}
\global\long\def\opisom{\operatorname{Isom}}
\global\long\def\dimnuc{\dim_{\operatorname{nuc}}}

\global\long\def\set#1{\left\{  #1\right\}  }


\global\long\def\mset#1{\left\{  \!\!\left\{  #1\right\}  \!\!\right\}  }

\global\long\def\Ra{\Rightarrow}
\global\long\def\La{\Leftarrow}
\global\long\def\LRa{\Leftrightarrow}

\global\long\def\quer{\overline{}}
\global\long\def\eins{\mathbf{1}}
\global\long\def\diag{\operatorname{diag}}
\global\long\def\ad{\operatorname{Ad}}
\global\long\def\ev{\operatorname{ev}}
\global\long\def\fin{{\subset\!\!\!\subset}}
\global\long\def\diam{\operatorname{diam}}
\global\long\def\Hom{\operatorname{Hom}}
\global\long\def\dst{{\displaystyle }}
\global\long\def\spp{\operatorname{supp}}
\global\long\def\spo{\operatorname{supp}_{o}}
\global\long\def\del{\partial}
\global\long\def\lsc{\operatorname{Lsc}}
\global\long\def\GU{\CG^{(0)}}
\global\long\def\HU{\CH^{(0)}}
\global\long\def\AU{\CA^{(0)}}
\global\long\def\BU{\CB^{(0)}}
\global\long\def\CUU{\CC^{(0)}}
\global\long\def\DU{\CD^{(0)}}
\global\long\def\QU{\CQ^{(0)}}
\global\long\def\TU{\CT^{(0)}}
\global\long\def\CUUU{\CC'{}^{(0)}}
\global\long\def\dom{\operatorname{dom}}
\global\long\def\ran{\operatorname{ran}}
\global\long\def\AUl{(\CA^{l})^{(0)}}
\global\long\def\BUl{(B^{l})^{(0)}}
\global\long\def\HUp{(\CH^{p})^{(0)}}
\global\long\def\sym{\operatorname{Sym}}
\global\long\def\stab{\operatorname{Stab}}
\newcommand{\cat}[0]{\operatorname{CAT}(0)}
\global\long\def\properlength{proper}
\global\long\def\deg{\operatorname{deg}}
\global\long\def\isom{\operatorname{Isom}}
\global\long\def\interior#1{#1^{\operatorname{o}}}
	\global\long\def\ln{\operatorname{ln}}

\author{Chunlin Liu}
	\address{C. Liu: School of Mathematical Sciences, Dalian University of Technology,
Dalian 116024, P.~R. China}
\email{chunlinliu@mail.ustc.edu.cn; chunlin.liu.math@gmail.com}

\author{Xin Ma}
	
	\address{X. Ma:  Institute for Advanced Study in Mathematics, Harbin Institute of Technology, Harbin, China, 150001}
  \email{xma17@hit.edu.cn}


\author{Jianchao Wu}
\address{J. Wu: Shanghai Center for Mathematical Sciences, Fudan University,
Shanghai, 200438, China}

\email{jianchao\_wu@fudan.edu.cn}

\keywords{}

\date{\today}

\begin{abstract}
In this paper, we first introduce fiberwise supramenability for locally compact Hausdorff \'{e}tale groupoids with compact unit space. Then, for such a minimal ample groupoid $\CG$, we show that if $\CG$ is fiberwise supramenable or topologically amenable, then the clopen type semigroup $S(\CG)$ is almost unperforated. Consequently, if $\CG$ is $\sigma$-compact, then it has groupoid strict comparison. We then present several applications, including Matui's AH conjecture and pure infiniteness for groupoids and groupoid $C^*$-algebras.
\end{abstract}
\maketitle

\section{Introduction}
The type semigroup of an ample groupoid $\CG$ records finite equidecompositions of
compact open sets in the finite amplification. Its almost unperforation
asserts that
\begin{equation}\label{eq:au}
 (n+1)a\leq nb\quad\Longrightarrow\quad a\leq b
 \qquad(a,b\in S(\CG),\ n\geq1).
\end{equation}
For minimal second-countable ample groupoids, this algebraic property is
equivalent to  dynamical comparison, as shown by Ara--B\"onicke--Bosa--Li
\cite[Theorem A]{ABBL}. Recently, this comparison property has played an important
role in several areas of locally compact Hausdorff \'{e}tale groupoids with
compact unit spaces, such as (tracial) $\CZ$-stability and pure infiniteness
for reduced groupoid $C^*$-algebras (see, e.g., \cite{MW2} and \cite{Mpure}),
and in homology theory, such as Matui's AH conjecture for ample groupoids
(see, e.g., \cite{Matui} and \cite{Li}).

For transformation groupoids, i.e. group actions, Glasner--Liu \cite{GL} recently established comparison for actions of countable amenable groups on compact zero-dimensional spaces. In \cite{GGKN}, Gardella--Geffen--Kranz--Naryshkin showed that every minimal, topologically free, topologically amenable action of an acylindrically hyperbolic group on a compact metrizable space has comparison. For further examples of groups from Bass--Serre theory for which all such actions have comparison, we refer to \cite{MWY}.

In the general groupoid setting, Austad--B\"{o}nicke \cite{AB} showed that if $\CG$ admits a nice geometric length function satisfying a certain uniform doubling condition, then $\CG$ has comparison. Moreover, Geffen--Hua--Kranz \cite{GHK} established comparison for certain Deaconu--Renault groupoids.

As for counterexamples, Boldrini--Prasad \cite{BP} proved that there exists a minimal topologically free action of a free group that has no comparison. However, their example is not topologically amenable. More recently, Keuppens--Martinez \cite{KM} constructed a topologically amenable, minimal, topologically principal groupoid model $\CG$ for the celebrated R\o rdam finite-infinite $C^*$-algebras. By \cite[Corollary B]{KM}, such a groupoid cannot have comparison. Nevertheless, this groupoid is not ample and actually its unit space has infinite covering dimension (see the statement immediately before \cite[Corollary B]{KM}).

In this paper, motivated by \cite{GL}, we prove that the type semigroup of a
groupoid is almost unperforated whenever the groupoid satisfies a certain
amenability condition. Two distinct notions of amenability are relevant here.
The first is the usual topological amenability, for which \cite{ADR} serves as
a standard reference. The second, called (ubiquitous) \textit{fiberwise
amenability}, was introduced in \cite{MW} as a geometric amenability condition
describing the coarse structure of the underlying groupoid and it corresponds to
the existence of an invariant probability measure on the unit space. For a more
detailed comparison between these two notions, we refer to the introduction of
\cite{MW}. For a transformation groupoid
$\CG=X\rtimes_\alpha \Gamma$ associated with an action
$\alpha:\Gamma\curvearrowright X$, the topological amenability of $\CG$ is
exactly the topological amenability of $\alpha$, while the fiberwise
amenability of $\CG$ corresponds to the amenability of the acting group
$\Gamma$. In general, however, neither notion necessarily implies the other for
groupoids; see \cite[Example 5.23]{MW}.

In addition, motivated by supramenable groups, we introduce \emph{fiberwise supramenability} for groupoids. Let $\CG$ be a locally compact Hausdorff \'{e}tale groupoid with compact unit space. Fiberwise supramenability asks that, for every unit $u$, every prescribed compact control set $K$, and every $\varepsilon>0$, every nonempty $E\subseteq \CG_u=s^{-1}(u)$ contains a nonempty finite $F$ such that
\begin{equation}\label{eq:intro-folner}
 \card{KF\cap E}\leq(1+\varepsilon)\card F.
\end{equation}
The precise definition appears in Section~\ref{sec:folner}. The terminology reflects the established distinction between amenability and supramenability for groups and coarse spaces and we refer to Kellerhals--Monod--R\o rdam \cite{KMR} for an example. It was proved in \cite{AB} that if a groupoid $\CG$ has (fiberwise) subexponential growth, then $\CG$ is topologically amenable. In Section~\ref{sec:folner}, we will show that (fiberwise) subexponential growth implies that $\CG$ is fiberwise supramenable. However, we do not know whether fiberwise supramenability implies topological amenability as well.

The following is our first main result. We refer to Section~\ref{sec:preliminaries} for the definitions of \textit{groupoid strict comparison}. We note that groupoid strict comparison is also referred to as \textit{dynamical comparison}, or simply comparison, in the literature. However, we prefer to keep our notation as in \cite{MW2}, because groupoid strict comparison is equivalent to dynamical comparison in this ample setting, but is strictly weaker in general, though still sufficient for applications to $C^*$-algebras (see \cite[Remark 7.4]{MW2}).

\begin{customthm}{A}\label{thm:A}
Let $\CG$ be a locally compact Hausdorff \'{e}tale ample groupoid with compact
unit space. If $\CG$ is fiberwise supramenable, then $S(\CG)$ is almost unperforated.  Then if $\CG$ is $\sigma$-compact and minimal, then $\CG$ has groupoid strict comparison
\end{customthm}

Our second result uses topological amenability instead of a condition on
subsets of fibers.

\begin{customthm}{B}\label{thm:B}
   Let $\CG$ be a locally compact Hausdorff topologically amenable 
ample \'{e}tale groupoid with compact unit space. Then
\[
 3a\leq2b\quad\Longrightarrow\quad a\leq b
 \qquad(a,b\in S(\CG)).
\]
If $\CG$ is minimal, then $S(\CG)$ is almost unperforated. Then if further $\CG$ is $\sigma$-compact, then $\CG$ has groupoid strict comparison.
\end{customthm}

Applying to transformation groupoids, Theorem \ref{thm:B}  yields dynamical comparison for all topologically amenable
minimal actions of countable groups on compact metrizable zero-dimensional
spaces. This proves the zero-dimensional cases of Conjecture D in
\cite{GGKN} and Problem~XXXIII in  Schafhauser-Tikuisis-White's survey \cite{STW}. 

The following are applications of Theorems \ref{thm:A} and \ref{thm:B}. The first one is to the Matui's AH conjecture. It was proved in \cite[Corollary E]{Li} by Li that if a locally compact Hausdorff \'{e}tale ample groupoid $\CG$ is minimal and has groupoid strict comparison, then $\CG$ satisfies Matui's AH conjecture. 

\begin{customcor}{C}[Theorem \ref{thm: AH}]
    Let $\CG$ be an ample minimal groupoid on an infinite compact unit space such that $\CG$ is fiberwise supramenable or $\CG$ is topological amenable. Then $\CG$ satisfies the AH conjecture.
\end{customcor}

We also have the following applications to pure infiniteness of groupoids and groupoid $C^*$-algebras.

\begin{customcor}{D}[Theorem \ref{thm: pure inf}]
    Let $\CG$ be an ample minimal topological amenable groupoid on an infinite compact unit space. 
    \begin{enumerate}
        \item If there is no $\CG$-invariant probability measure on $\GU$, then $\CG$ is purely infinite.  
        \item Suppose $\CG$ is topological principal. Then its reduced groupoid $C^*$-algebra $C^*_r(\CG)$ is unital, simple,  nuclear and purely infinite $C^*$-algebra. 
        \item If $\CG$ is further assumed to be second countable, then $C^*_r(\CG)$ is a unital Kirchberg algebra satisfying the UCT.
    \end{enumerate}
     
\end{customcor}
In particular, any minimal, topologically free, topologically amenable action of non-amenable groups on zero-dimensional spaces will provide unital Kirchberg algebras, that is classfied by Elliott's invariant. This, in particular, has addressed \cite[Problem~XXXII]{STW} in this case. In addition, we also record an application to almost elementariness in Theorem \ref{thm:AE}.

\begin{rmk}
We would like to mention that, during the preparation of this paper,
Elek--Tim\'{a}r posted a preprint on arXiv \cite{ET} containing closely
related results (\cite[Theorem 1]{ET}) to our Theorem \ref{thm:B}. Their results studied topologically amenable groupoids under
the additional assumptions of principality and second countability.
\end{rmk}

\section{Preliminaries\label{sec:preliminaries}}
\subsection{Backgrounds on groupoids}
We recall some basic backgrounds on
\'{e}tale groupoids and $C^{*}$-algebras.
We refer to \cite{Renault} and \cite{Sims}
as references for groupoids and we record several fundamental definitions
and results for locally compact Hausdorff \'{e}tale groupoids here. 
\begin{defn}
A \textit{groupoid} $\CG$ is a set equipped with a distinguished
subset $\CG^{(2)}\subseteq\CG\times\CG$, called the set of \textit{composable
pairs}, a product map $\CG^{(2)}\rightarrow\CG$, denoted by $(\gamma,\eta)\mapsto\gamma\eta$
and an inverse map $\CG\rightarrow\CG$, denoted by $\gamma\mapsto\gamma^{-1}$
such that the following hold 
\begin{enumerate}[label=(\roman*)]
\item If $(\alpha,\beta)\in\CG^{(2)}$ and $(\beta,\gamma)\in\CG^{(2)}$
then so are $(\alpha\beta,\gamma)$ and $(\alpha,\beta\gamma)$. In
addition, $(\alpha\beta)\gamma=\alpha(\beta\gamma)$ holds in $\CG$.
\item For all $\alpha\in\CG$ one has $(\gamma,\gamma^{-1})\in\CG^{(2)}$
and $(\gamma^{-1})^{-1}=\gamma$.
\item For any $(\alpha,\beta)\in\CG^{(2)}$ one has $\alpha^{-1}(\alpha\beta)=\beta$
and $(\alpha\beta)\beta^{-1}=\alpha$. 
\end{enumerate}
Every groupoid is equipped with a subset $\GU=\{\gamma\gamma^{-1}:\gamma\in\CG\}$
of $\CG$. We refer to elements of $\GU$ as \textit{units} and to
$\GU$ itself as the \textit{unit space}. We define two maps $s,r:\CG\rightarrow\GU$
by $s(\gamma)=\gamma^{-1}\gamma$ and $r(\gamma)=\gamma\gamma^{-1}$,
respectively, in which $s$ is called the \textit{source} map and
$r$ is called the \textit{range} map. 
\end{defn}
\label{paragraph:sections-in-groupoids}
When a groupoid $\CG$ is endowed with a locally compact Hausdorff
topology under which the product and inverse maps are continuous,
the groupoid $\CG$ is called a locally compact Hausdorff groupoid.
A locally compact Hausdorff groupoid $\CG$ is called \textit{\'{e}tale}
if the range map $r$ is a local homeomorphism from $\CG$ to itself,
which means for any $\gamma\in\CG$ there is an open neighborhood
$U$ of $\gamma$ such that $r(U)$ is open and $r|_{U}$ is a homeomorphism.
In this case, since the map of taking inverses is an involutive homeomorphism
on $\CG$ that intertwines $r$ and $s$, thus the source map $s$
is also a local homeomorphism. 
A subset $S$ in $\CG$ is called an \emph{$s$-section} (respectively, an \emph{$r$-section})
if there is an open neighborhood $U$ of $S$ in $\CG$ such that
the source map $s$ (respectively,
the range map $r$) restricts to a homeomorphism from $U$
onto an open subset of $\GU$. 
It is called a \emph{bisection} if it is both an $s$-section and an $r$-section. 
It is not hard to see a locally compact
Hausdorff groupoid is \'{e}tale if and only if its topology has a basis
consisting of open bisections. We say a locally compact Hausdorff
\'{e}tale groupoid $\CG$ is \textit{ample} if its topology has a basis
consisting of compact open bisections.

\begin{eg}
\label{exa:transformation-groupoid}
Let $X$ be a locally compact
Hausdorff space and $\Gamma$ be a discrete group. Then any action
$\Gamma\curvearrowright X$ by homeomorphisms induces a locally compact
Hausdorff \'{e}tale groupoid 
\[
X\rtimes\Gamma\coloneqq\{(\gamma x,\gamma,x):\gamma\in\Gamma,x\in X\}
\]
equipped with the relative topology as a subset of $X\times\Gamma\times X$.
In addition, $(\gamma x,\gamma,x)$ and $(\beta y,\beta,y)$ are composable
only if $\beta y=x$ and 
\[
(\gamma x,\gamma,x)(\beta y,\beta,y)=(\gamma\beta y,\gamma\beta y,y).
\]
One also defines $(\gamma x,\gamma,x)^{-1}=(x,\gamma^{-1},\gamma x)$
and announces that $\GU\coloneqq\{(x,e_{\Gamma},x):x\in X\}$. It
is not hard to verify that $s(\gamma x,\gamma,x)=x$ and $r(\gamma x,\gamma,x)=\gamma x$.
The groupoid $X\rtimes\Gamma$ is called a \textit{transformation
groupoid}. 
\end{eg}
The following are several basic properties of locally compact Hausdorff
\'{e}tale groupoids whose proofs could be found in \cite{Sims}.
\begin{prop}
Let $\CG$ be a locally compact Hausdorff \'{e}tale groupoid. Then $\GU$
is a clopen set in $\CG$. 
\end{prop}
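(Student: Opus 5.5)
The plan is to show that the complement $\CG\setminus\GU$ is open and that $\GU$ is closed; together these give that $\GU$ is clopen.

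For openness, we fix $u\in\GU$ and use that $\CG$ is étale to choose an open bisection $U$ containing $u$. The set $V\coloneqq U\cap r^{-1}(r(U)\cap\ldots)$ is not quite what we need, so we argue more directly. Since $r|_U$ is a homeomorphism onto the open set $r(U)\subseteq\GU$, we consider
\[
W\coloneqq\{\gamma\in U:\ r(\gamma)=\gamma\}\ \text{or, more cleanly,}\ W\coloneqq U\cap (r|_U)^{-1}(r(U)\cap U).
\]
Rather than work with this, the cleaner route is as follows. Put $W\coloneqq U\cap r^{-1}(U)$, which is an open neighborhood of $u$. For $\gamma\in W$, both $\gamma$ and $r(\gamma)$ lie in $U$, and they have the same range, because $r(r(\gamma))=r(\gamma)$. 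Injectivity of $r$ on $U$ then forces $\gamma=r(\gamma)\in\GU$. Hence $W\subseteq\GU$, and so $\GU$ is open in $\CG$.

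For closedness, we observe that $\GU=\{\gamma\in\CG:\ r(\gamma)=\gamma\}$. Indeed, if $\gamma=r(\gamma)$ then $\gamma$ is a unit. Conversely, every unit $u=\eta\eta^{-1}$ satisfies $r(u)=uu^{-1}=u$, which follows from the groupoid axioms since units are idempotent and self-inverse. Since $r$ is continuous (as a composition of the continuous product and inverse maps) and $\CG$ is Hausdorff, the diagonal $\Delta\subseteq\CG\times\CG$ is closed. Therefore $\GU$, being the preimage of $\Delta$ under the continuous map $\gamma\mapsto(\gamma,r(\gamma))$, is closed.

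The only step requiring care is the openness argument. It is exactly there that the étale hypothesis enters, through the local injectivity of $r$; without it, openness fails in general, for example for non-discrete groups viewed as groupoids. The remaining verifications, such as the identity $r(u)=u$ for units, are routine consequences of the axioms.
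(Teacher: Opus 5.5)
Your proof is correct. It differs from the usual argument only in the openness half. The paper gives no proof of its own and refers to Sims' notes.

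The usual openness argument uses the paper's definition of étale directly. There, $r\colon\CG\to\CG$ is a local homeomorphism, so each unit $u$ has a neighbourhood $U$ with $r(U)$ open in $\CG$. Then $r(U)$ is an open subset of $\CG$ that contains $u=r(u)$ and lies in $\GU$. Equivalently, $r$ is an open map, so $\GU=r(\CG)$ is open.

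Your route via $W\coloneqq U\cap r^{-1}(U)$ needs only two facts: $r$ is continuous, and $r$ is injective on some neighbourhood of each unit. It is slightly longer, but it is more robust. It still works under the common convention in which $r$ is only a local homeomorphism onto open subsets of $\GU$, as in the paper's definition of $r$-sections. Under that convention, openness of $r(U)$ in $\CG$ is not available before one knows $\GU$ is open. It also uses neither local compactness nor the Hausdorff property.

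Your closedness argument is the standard one: $\GU$ is the fixed-point set of the continuous map $r$ in a Hausdorff space.

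In a write-up, delete the abandoned attempts, namely the set $V$ and the first definition of $W$. Present only $W\coloneqq U\cap r^{-1}(U)$ and the injectivity argument.
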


For any set $D\subseteq\GU$, Denote by 
\[
\CG_{D}\coloneqq\{\gamma\in\CG:s(\gamma)\in D\},\ \CG^{D}\coloneqq\{\gamma\in\CG:r(\gamma)\in D\},\ \text{and}\ \ \CG_{D}^{D}\coloneqq\CG^{D}\cap\CG_{D}.
\]
For the singleton case $D=\{u\}$, we write $\CG_{u}$, $\CG^{u}$
and $\CG_{u}^{u}$ instead for simplicity. In this situation, we call
$\CG_{u}$ a \textit{source fiber} and $\CG^{u}$ a \textit{range
fiber}. In addition, each $\CG_{u}^{u}$ is a group, which is called
the \textit{isotropy} at $u$. We also denote by \[\opIso(\CG)=\bigcup_{u\in \GU}\CG^u_u=\{x\in \CG: s(x)=r(x)\}\] the isotropy of the groupoid $\CG$. We say a groupoid $\CG$ is \textit{principal}
if $\opIso(\CG)=\GU$. A groupoid $\CG$ is called \textit{topologically
principal} if the set $\{u\in\GU:\CG_{u}^{u}=\{u\}\}$ is dense in
$\GU$. The groupoid $\CG$ is also said to be \textit{effective} if $\opIso(\CG)^o=\GU$. Recall that effectiveness is equivalent to topological principality  if $\CG$ is second countable (See \cite[Lemma 4.2.3]{Sims}). Therefore, effectiveness is equivalent to the topological freeness of an action of a countable discrete group acting on a compact metrizable space by looking at the corresponding transformation groupoid.

A subset $D$ in $\GU$ is called $\CG$-\textit{invariant}
if $r(\CG D)=D$, which is equivalent to the condition $\CG^{D}=\CG_{D}$.
Note that $\CG|_{D}\coloneqq\CG_{D}^{D}$ is a subgroupoid of $\CG$
with the unit space $D$ if $D$ is a $\CG$-invariant set in $\GU$.
A groupoid $\CG$ is called \textit{minimal} if there are no proper
non-trivial closed $\CG$-invariant subsets in $\GU$.

\begin{defn}
\label{def:invariant-measure}A measure on the unit space of an \'{e}tale groupoid $\CG$ is \emph{invariant} if $\mu(r(U))=\mu(s(U))$
for any Borel measurable bisection $U$. We write $M(\CG)$ for the collection
of all \emph{invariant regular Borel probability measures} on $\CG^{(0)}$. \end{defn}

\begin{defn}(\cite[Definition 7.1]{MW2})
	\label{defn:subequivalence}
	Let $\CG$ be an \'{e}tale groupoid. Let $K$ be a compact subset of $\CG^{(0)}$ and let $U,V \subseteq \CG^{(0)}$. 
	\begin{enumerate}[label=(\roman*)]
		\item We write $K\prec_{\CG}V$ if there is an open $s$-section $S$ such that $K \subseteq r(S)$ and $s(S) \subseteq V$. 
		\item We write $U\precsim_{\CG}V$
		if $K\prec_{\CG}V$ for every compact subset $K\subseteq U$. 
	\end{enumerate}
\end{defn}

\begin{defn}(\cite[Definition 8.1]{MW2})
	\label{defn:comparison}
	We say a minimal \'{e}tale groupoid $\CG$ with a compact unit space has \emph{groupoid strict comparison} (or simply \emph{groupoid
		comparison} or\emph{ comparison}) if, for any nonempty open sets $U,V\subseteq\CG^{(0)}$,
	we have $U\precsim_{\CG}V$ whenever $\mu(U)<\mu(V)$ for all $\mu\in M(\CG)$.
\end{defn}

\begin{rmk}
    Let $\CG$ be a locally compact Hausdorff \'{e}tale ample minimal groupoid with a compact unit space. Then it follows from \cite[Remark 7.4]{MW2} that the groupoid strict comparison is equivalent to the following statement: for any non-empty compact open sets $U, V$ in $\GU$, if $\mu(U)<\mu(V)$ holds for any $\mu\in M(\CG)$, then there exists compact open bisections $O_1,\dots, O_n$ such that 
    $U=\bigsqcup_{i=1}^ns(O_i)$ and $\bigsqcup_{i=1}^n r(O_i)\subset V$. Note that this is the usual dynamical comparison for ample groupoids in the literature.
\end{rmk}

Let $\CG$ be a locally compact Hausdorff \'{e}tale groupoid.  $C_c(\CG)$ be the set of continuous functions from $\CG$ to $\mathbb{R}$ with compact support.  If $\CG$ is an \'etale groupoid, then a \emph{continuous system of probability measures} for $\CG$ is a system $\{\lambda^u\mid u\in \CG^{(0)}\}$ of Radon probability measures $\lambda^u$ on $\CG$ with the support of $\lambda^u$ contained in $\CG^u:=\{g\in \CG \mid r(g) =u\}$ such that for $f\in C_c(\CG)$, the function $u \mapsto \int f d\lambda^u$ is continuous. Note that since $\CG^u$ is discrete, a Radon probability measure $\lambda^u$ on $\CG^u$ amounts to a function $\lambda^u\colon \CG^u \to [0,\infty)$ with $\Sigma_{z\in \CG^{u}} \lambda^u(z) = 1$.

An \emph{approximate invariant continuous mean} for $\CG$ is a net $\lambda_i$ of continuous systems of probability measure for $\CG$ such that the net $(M_i: \gamma \mapsto ||\lambda_i^{r(\gamma)}(\gamma \cdot) - \lambda_i^{s(\gamma)}(\cdot)||_1)_i$ of functions from $\CG$ to $[0,\infty)$ has the property that $M_i\mid_K \to 0$ uniformly for every compact $K\subseteq \CG$.

\begin{defn}
    Let $\CG$ be an \'etale groupoid. We say that $\CG$ is (topologically) amenable if $\CG$ admits an approximate invariant continuous mean.
\end{defn}

We remark that the topological amenability is permanent under finite amplification,
open subgroupoids, and closed invariant reductions. We also make a standing assumption that \textbf{all groupoids in the following are locally compact Hausdorff \'{e}tale groupoids with a compact unit space.}

\subsection{The type semigroup}
Following the vocabulary in \cite{Melleray},
let $\mathcal R=\N\times\N$ be the discrete pair groupoid, with an arrow
$(i,j)$ from $j$ to $i$. Put $\CG^{\st}=\CG\times\mathcal R$.
An element of $S(\CG)$ is represented by a compact open subset of
$\GU\times\N$. Such a set $A$ occupies only finitely many labels, which means $A\cap (\GU\times \{n\})=\emptyset$ for all large enough $n$. Two
representatives are equivalent if a compact open bisection of $\CG^{\st}$ has
one as its source and the other as its range. Addition is disjoint union after
retagging. The algebraic order satisfies
\begin{equation}\label{eq:subequiv}
 [A]\leq[B]\quad\Longleftrightarrow\quad
 \text{a compact open bisection has source }A
 \text{ and range contained in }B.
\end{equation}
This is the usual type semigroup; see \cite{ABBL}. Indeed, an algebraic
inequality $[A]+c=[B]$ restricts an equidecomposition to $A$, and conversely
the unused compact open part of $B$ represents the remainder $c$.

A commutative monoid $S$ is said to be \textit{conical} if for any $x, y\in S$, the equality $x+y=0$ implies that $x=y=0$. In addition, $S$ is said to be a \textit{refinement monoid} if for any $a, b,c,d\in S$ satisfying $a+b=c+d$, then there exists $w, x,y,z\in S$ such that $a=w+x$, $b=x+y$, $c=w+y$, and $d=x+z$. It is straightforward to see that the monoid $S(G)$ is conical and has refinement. An element $u$ in a monoid $S$ is said to be an \textit{order unit} if for any $x\in S$ there exists an $n\in \N$ such that $x\leq nu$. The monoid $S$ is said to be \textit{simple} if every non-zero element is an order unit.

\begin{rmk}\label{rmk: type semigroup conical and simple}
First it is straightforward to see $S(\CG)$ is conical. In the case that $\CG$ is minimal,
every nonzero element in $S(\CG)$ is an order unit and therefore $S(\CG)$ is simple.
To see this, let $[B]\in S(\CG)$ and $U\subset \GU$ be non-empty such that $[U]\leq [B]$. Minimality and compactness of $\GU$ yield finitely many compact open
bisections with sources covering $\GU$ and ranges in $U$. Partition their
sources into disjoint clopen pieces and use separate target labels. This
gives $[\GU]\leq m[U]$ for some $m$. Then note that every $[A]\in S(\CG)$  is bounded by $n[\GU]$ for some $n\in \N$. Therefore, one has $[A]\leq n[\GU]\leq nm[U]\leq nm[B]$.  
\end{rmk}

\subsection{An algebraic comparison principle}

We recall the refinement-monoid result used only for
Theorem~\ref{thm:B}. A simple conical monoid $M$ with order unit $u$ has
\emph{weak comparability} if, for every $c\ne0$, there is $k\geq1$ such that
\[
 kd\leq u\quad\Longrightarrow\quad d\leq c.
\]
For a simple conical refinement monoid, weak comparability implies almost
unperforation. We refer to \cite[Proposition 1.6.8]{Wehrung} and
\cite[Proposition 2.20 and its proof]{Melleray}.

\begin{lemma}\label{lem:three-algebra}
Let $M$ be a simple conical refinement monoid. If $3a\leq2b$ implies
$a\leq b$ for all $a,b\in M$, then $M$ is almost unperforated.
\end{lemma}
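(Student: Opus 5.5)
The plan is to deduce weak comparability from the hypothesis, and then quote the cited fact that, for a simple conical refinement monoid, weak comparability implies almost unperforation (\cite[Proposition 1.6.8]{Wehrung}, \cite[Proposition 2.20]{Melleray}). If $M=\{0\}$ there is nothing to prove, so fix a nonzero order unit $u$. Since $M$ is simple, every nonzero element is an order unit. Fix $c\neq 0$; we must find $k\geq1$ with $kd\leq u\Rightarrow d\leq c$.

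\textbf{Step 1: iterate the hypothesis.} I claim that for each $m\geq0$,
\[
3^m a\leq 2^m b\ \Longrightarrow\ a\leq b .
\]
The proof is by induction on $m$. The base cases $m=0,1$ hold by assumption. For the inductive step, suppose $3^{m+1}a\leq 2^{m+1}b$. Write this as $3(3^m a)\leq 2(2^m b)$, and to apply the hypothesis, compare $3^m a$ with $2^m b$ directly rather than rescaling. The hypothesis applied to the pair $(3^m a,\,2^m b)$ gives $3^m a\leq 2^m b$. The inductive hypothesis then gives $a\leq b$.

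\textbf{Step 2: choose $k$.} Since $c$ is an order unit, there is $N$ with $u\leq Nc$. Choose $m$ with $(3/2)^m\geq N$, i.e.\ $3^m\geq N2^m$, and set $k=3^m$. Suppose $kd\leq u$. Then
\[
3^m d\leq u\leq Nc .
\]
The remaining task is to get $3^m d\leq 2^m c$ from this, and then Step 1 gives $d\leq c$.

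\textbf{Main obstacle.} The inequality $Nc\leq 2^m c$ fails in general, so Step 2 cannot be closed by comparing multiples of $c$. I expect the trick to be to absorb $N$ into $d$ instead. Choose $m$ so that $3^m\geq N\cdot 2^m$ and put $k=3^m$. Then, assuming additionally that $2^m d\leq c$ whenever $N2^m d\leq Nc$, the chain closes. More carefully, set $k=N\cdot 3^m$ with $3^m\geq 2^m$. From $kd\leq u\leq Nc$, I would try to use the refinement property to split $Nc$ and extract $3^m d'\leq 2^m c$ for a suitable piece. This step, passing from an inequality between $N$-fold multiples down to single multiples, is exactly where refinement and conicity must enter. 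If it resists, the fallback is to invoke the cited Wehrung/Melleray argument, which shows that in simple conical refinement monoids the set of pairs $(p,q)$ for which $pa\leq qb\Rightarrow a\leq b$ holds is closed under the operations needed to reach all ratios $p/q>1$. That argument would upgrade the $3/2$ statement to full almost unperforation directly.
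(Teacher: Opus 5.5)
\textbf{The gap is a false ``obstacle''.} Your Step~1 is correct, and your Step~2 is already the paper's argument. The gap is that you then declare the closing inequality false and leave the proof unfinished. The assertion ``$Nc\leq 2^m c$ fails in general'' is wrong. The order here is the algebraic preorder ($x\leq y$ iff $x+z=y$ for some $z$), so for $N\leq 2^m$ one has $2^m c=Nc+(2^m-N)c$, hence $Nc\leq 2^m c$. Your own choice of $m$ already gives $N\leq (3/2)^m\leq 2^m$. Therefore
$3^m d\leq u\leq Nc\leq 2^m c$,
and Step~1 yields $d\leq c$. This is weak comparability with $k=3^m$, and the cited Wehrung/Melleray result then gives almost unperforation. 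This is exactly the paper's proof: it picks $n$ with $u\leq 2^n c$ directly and peels off one factor $3/2$ at a time, as in your Step~1.

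\textbf{Why the proposal is still incomplete as written.} The ``main obstacle'' paragraph replaces this one-line observation with plans you never carry out.
First, you take $k=N\cdot 3^m$ and propose ``splitting $Nc$ by refinement'' to extract some $3^m d'\leq 2^m c$. You give no construction and no indication of how $d'$ relates to $d$; in particular, you cannot cancel $N$ from $N3^m d\leq Nc$.
Second, your fallback misdescribes the citations. \cite[Proposition 1.6.8]{Wehrung} and \cite[Proposition 2.20]{Melleray} give ``weak comparability $\Rightarrow$ almost unperforation'' for simple conical refinement monoids. They do not give a closure property of admissible ratios $p/q$ that would upgrade the $3/2$ statement directly.
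Refinement and conicity play no role in deriving weak comparability here; they enter only through that cited implication. Delete the obstacle paragraph, insert the observation $Nc\leq 2^m c$, and the argument is complete.
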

\begin{proof}
The zero monoid is immediate. Otherwise it suffices to show $M$ has the weak comparability. Fix an order unit $u$ and $c\ne0$ in $M$.
Choose $n\geq1$ with $u\leq2^n c$. Choose $k=3^n$ and suppose $kd=3^n d\leq u$. This implies
$3^n d\leq2^n c$. Apply the hypothesis to
$3(3^{n-1}d)\leq2(2^{n-1}c)$ and repeat to obtain $d\leq c$.
This implies that $M$ has weak comparability and thus $M$ is almost unperforated.
\end{proof}

\section{Fiberwise supramenability}\label{sec:folner}
\textit{Fiberwise amenability} and its refinement \textit{ubiquitous fiberwise amenability}, as new geometric amenability properties for \'etale groupoid, were introduced in \cite{MW}. Such notions are different from the usual topological amenability of groupoid as demonstrated in \cite[Example 5.23]{MW}. We refer to the introduction of \cite{MW} for more information. In addition, they have played important roles in the study of (tracial) $\CZ$-stability of reduced groupoid $C^*$-algebras \cite{MW2}. In this paper, motivated by supreamenable groups, we introduce stronger properties for \'{e}tale groupoids called (uniform) fiberwise supreamenability. 

\begin{defn}\label{def:fs}
We say that $\CG$ is \emph{fiberwise supramenable} if for any $u\in \GU$, $E\subset \CG_u$,  compact set $K$, and $\varepsilon>0$, there exists a finite $F\subset E$ such that 
\[|KF\cap E|\leq (1+\varepsilon)|F|.\]

We say $\CG$ is \emph{uniformly fiberwise supramenable} if, for every such $K$ and
$\varepsilon$, there is a compact $L\subseteq G$ satisfying the condition that
for any $u\in \GU$ and any $E\subset \CG_u$ and a $\gamma\in E$, there exists  and a (finite) $F\subset E\cap L\gamma$ such that 
\[\card{KF\cap E}\leq(1+\varepsilon)\card F.\]
\end{defn}

In general, fiberwise amenability is not equivalent to ubiquitous fiberwise amenability unless we make extra assumptions on $\CG$ such as minimality or being a transformation groupoid. We refer to \cite[Theorem C, Proposition 5.18]{MW} for more information. However, we will show in the following that their respective strengthenings, fiberwise supramenable and uniform fiberwise supramenable, coincide. The proof has a similar flavor with the local slice lemma in \cite[Lemma 5.11]{MW}. But we have managed to avoid using the length function here.

\begin{prop}\label{prop:pointwise and uniform supramen equivalent}
    Let $\CG$ be a groupoid on a compact unit space. Then $\CG$ is fiberwise supramenable if and only if $\CG$ is uniformly fiberwise supramenable.
 \end{prop}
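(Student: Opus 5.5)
The implication from uniform to pointwise fiberwise supramenability is immediate. Given $u$, a nonempty $E\subseteq\CG_u$, $K$ and $\eps$, pick any $\gamma\in E$; uniform fiberwise supramenability then produces a finite $F\subseteq E\cap L\gamma\subseteq E$ with the required inequality. (The pointwise condition must be read for nonempty $E$, as in the introduction.) The work is in the converse, which I would prove by contradiction using a compactness argument on the compact unit space, in the spirit of a Chabauty-type limit of subsets of fibers.

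\emph{Reduction to a unit base point.} Suppose $\CG$ is fiberwise supramenable but, for some compact $K$ and $\eps>0$, no compact $L$ works. For each compact $L$ we then get $u_L$, $E_L\subseteq\CG_{u_L}$ and $\gamma_L\in E_L$ such that every finite $F\subseteq E_L\cap L\gamma_L$ satisfies $|KF\cap E_L|>(1+\eps)|F|$. Right translation by $\gamma_L^{-1}$ is a bijection $\CG_{u_L}\to\CG_{r(\gamma_L)}$, and it satisfies $K(F\gamma_L^{-1})=(KF)\gamma_L^{-1}$. So we may replace $E_L$ by $E_L\gamma_L^{-1}$ and assume $\gamma_L=u_L\in E_L$. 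The failure then reads: every finite $F\subseteq E_L\cap L$ has $|KF\cap E_L|>(1+\eps)|F|$.

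\emph{Passing to a limit.} Index by the directed set of compact subsets of $\CG$ and take a universal subnet. By compactness of $\GU$ we have $u_L\to u$. Define $E\subseteq\CG_u$ as follows: $\eta\in E$ iff for every open bisection $W\ni\eta$, eventually $W\cap E_L\neq\emptyset$. Since the net is universal, for each $W$ either this holds eventually or fails eventually. Because $u_L\in E_L$, we get $u\in E$, so $E\neq\emptyset$. Fiberwise supramenability at $u$ gives a finite $F\subseteq E$ with $|KF\cap E|\le(1+\eps)|F|$. Next choose pairwise disjoint open bisections $W_\eta$ around the finitely many points $\eta\in F\cup(KF\cap\CG_u)$; note $KF$ is finite, since $K$ meets each range fiber in finitely many points and $F$ is finite.

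\emph{Transferring the estimate back to $E_L$.} For each $f\in F$, eventually $W_f\cap E_L\cap\CG_{u_L}$ is a single point $f_L$, because $W_f$ is a bisection and $s(W_f)\ni u_L$ eventually. Set $F_L=\{f_L\}$, so $|F_L|=|F|$. For $L$ containing a compact neighborhood of $F$, we also have $F_L\subseteq E_L\cap L$, so the failure gives $|KF_L\cap E_L|>(1+\eps)|F|$. It remains to show that eventually $|KF_L\cap E_L|\le|KF\cap E|$, which yields the contradiction.

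\emph{Main obstacle: the counting bound.} I expect the delicate step to be this counting argument. The plan has two parts.
\begin{enumerate}
\item Show that eventually $KF_L\subseteq\bigcup_{\eta\in KF}W_\eta$. If not, frequently there are $k_L\in K$ and $f\in F$ with $k_Lf_L$ outside these sets. Passing to a subnet, $k_L\to k\in K$ and $f_L\to f$, so $k_Lf_L\to kf\in KF$ by continuity of multiplication. Then $k_Lf_L$ lies in the open set $W_{kf}$ eventually, a contradiction.
\item Each $W_\eta$ contains at most one point of $\CG_{u_L}$. If $W_\eta\cap E_L\neq\emptyset$ frequently, then by universality it holds eventually, and the same holds for every smaller bisection around $\eta$, so $\eta\in E$.
\end{enumerate}
Together these give an injection, for large $L$, from $KF_L\cap E_L$ into $KF\cap E$. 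Hence $(1+\eps)|F|<|KF_L\cap E_L|\le|KF\cap E|\le(1+\eps)|F|$, which is the desired contradiction.
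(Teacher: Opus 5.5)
Your proof is correct. It is the same compactness argument as the paper's, run in contrapositive form and with different tools.

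\textbf{The paper's route.} It works directly in the Fell topology. It uses that $\CP=\{(u,E)\in\GU\times F(\CG): u\in E\subseteq\CG_u\}$ is compact. For each $(u,E)\in\CP$ it takes a witness $F=\{f_1,\dots,f_n\}$ and compact bisections $B_i\ni f_i$ with union $C$. It then forms the Fell-open neighborhood $O$ cut out by $E'\cap B_i^{o}\neq\emptyset$ and $E'\cap D=\emptyset$, where $D=KC\setminus\bigcup_j V_j$. On $O$ the set $E'\cap C$ is again a witness, so a finite subcover produces $L=\bigcup C_i$ directly. The translation by $\gamma^{-1}$ comes at the end.

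\textbf{Your route.} You translate first, assume every $L$ fails, and build the Fell limit $(u,E)$ of a universal subnet by hand. Your points $f_L$ play the role of $E'\cap B_i$. Your steps (1) and (2) together give what the condition $E'\cap D=\emptyset$ gives in the paper: eventually $KF_L\cap E_L\subseteq\bigcup_{\eta\in KF\cap E}W_\eta$.

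\textbf{Trade-offs.} Your version is self-contained: it does not appeal to compactness of the Fell topology or to the closedness of $\CP$, which the paper only asserts. The price is that it is non-constructive and relies on ultranets. The paper's version is shorter and yields $L$ without a contradiction argument.

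\textbf{Points to write out.}
\begin{enumerate}
\item Several steps rest on one fact that you should state explicitly. Let $\eta\in\CG_u$ and let $W\ni\eta$ be an open bisection. The only possible point of $W\cap E_L$ is $(s|_W)^{-1}(u_L)$, and this converges to $\eta$ because $u_L\to u\in s(W)$ and $s|_W$ is a homeomorphism onto an open set.
\begin{itemize}
\item This fact gives $f_L\to f$, which you use in (1) and to get $F_L\subseteq L$ eventually.
\item It is also what justifies ``the same holds for every smaller bisection'' in (2). Without it, a point of $W_\eta\cap E_L$ could a priori drift to the boundary of $W_\eta$ rather than to $\eta$.
\end{itemize}
\item The failure hypothesis should quantify over nonempty finite $F$. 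Otherwise $F=\emptyset$ trivially satisfies the inequality.
\item $kf$ is defined for $k\in K\cap\CG_{r(f)}$, which is a source fiber rather than a range fiber. This intersection is still finite, since a compact set meets every fiber in finitely many points.
\end{enumerate}
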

 \begin{proof}
     It only suffices to show fiberwise supramenability implies uniform fiberwise supramenability. Indeed, we denote by $F(\CG)$ the set of all closed set in $\CG$, including the empty set. Equipped with the Fell topology, $F(\CG)$ is a compact Hausdorff space because $\CG$ is locally compact Hausdorff. Define 
     \[\CP=\{(u, E)\in \GU\times F(\CG): u\in E\subset \CG_u\}.\]
     Note that all subsets of $\CG_u$ is closed because $\CG_u$ itself is discrete. In addition, using the Fell topology, it is not hard to verify that $\CP$ is closed and thus compact.

     Now, let $K$ be a compact set in $\CG$ containing $\GU$ and $\varepsilon>0$. Let $(u, E)\in \CP$. By fiberwise supramenability, one may choose a $F=\{f_1,\dots, f_n\}$ such that $|KF\cap E|\leq (1+\varepsilon)n$.  Since $\CG$ is Hausdorff, choose disjoint compact bisection $B_1,\dots, B_n$ with $f_i\in B_i^o\subset B_i$ for all $i=1,\dots, n$. Define a compact $C=\bigcup_{i=1}^n B_i$, which satisfies $C\cap G_u=F$. This further implies that 
     \begin{equation}\label{eq: fiber u}
         E\cap KC= \CG_u\cap E\cap KC= K(C\cap  \CG_u)\cap E= E\cap KF. 
     \end{equation}
      Write $KF\cap E=\{h_1, \dots, h_m\}$ and choose an open bisection $V_j\ni h_j$ for each $j=1,\dots, m$. Define $D=KC\setminus \bigcup_{j=1}^mV_j$, which is compact. Then the equality \eqref{eq: fiber u} implies $E\cap D=\emptyset$.  Then define an open neighborhood 
      \[O=\big(\bigcap_{i=1}^n s(B_i^o)\big) \times \big(\bigcap_{i=1}^n\{E\in F(\CG): E\cap B^o_i\neq \emptyset \}\cap \{E\in F(\CG): E\cap D=\emptyset\}  \big)\]
      for the $(u, E)\in \CP$. For any $(u', E')\in O$, define $F'=E'\cap \bigcup_{i=1}^n B_i$. Note that $F'\subset C$ and $|F'|=n$. Furthermore, note that 
      $KF'\cap E'\subset KC\cap E'\subset \bigcup_{j=1}^mV_j$ and therefore, one has
      \begin{equation}\label{eq2}
          |KF'\cap E'|\leq m\leq (1+\varepsilon)n=(1+\varepsilon)|F'|.
      \end{equation}

Then by the compactness of $\CP$, one may find finitely many such $O_1, \dots, O_N$ to cover $\CP$, which yield corresponding compact sets $C_1, \dots, C_N$. Define $L=\bigcup_{i=1}^NC_i$. Now let $E\subset \CG_u$ for some $u\in \GU$ and a $\gamma\in E$, define $E_0=E\gamma^{-1}\subset \CG_{r(\gamma)}$. Then $r(\gamma)\in E_0$ and therefore, $(r(\gamma), E_0)\in \CP$. Then by what we obtained above, there exists a $F_0\subset L\cap E_0$ satisfying the inequality \eqref{eq2}. Let $F=F_0\gamma$ and thus
\[|KF\cap E|=|KF_0\cap E_0|\leq (1+\varepsilon)|F_0|=(1+\varepsilon)|F|.\]
\end{proof}

Therefore, in the following, we will not distinguish these two notions. For transformation groupoids with compact nonempty unit space, like fiberwise amenability in \cite{MW}, the fiberwise suprameanbility is the
 F\o lner characterization of supramenability of the acting group.
We remark that it is demonstrated in \cite{KMR} that amenable but nonsupramenable groups exist.

In order to deal with the type semigroup, we need a strengthening of fiberwise supramenability that deals with ``multisets''. 
Write $[m]=\set{1,\ldots,m}$ and let
$\pi:\CG_u\times[m]\to \CG_u$ be the projection. If
$\emptyset\ne E\subseteq \CG_u\times[m]$, define, for finite $F\subseteq E$, a set
\begin{equation}\label{eq:label-neighborhood}
 N_K^E(F)=E\cap\bigl(K\pi(F)\times[m]\bigr).
\end{equation}

\begin{lemma}\label{lem:labels}
Suppose $\CG$ is fiberwise supramenable. For every $u\in \GU$, $m\geq1$,
$\emptyset\ne E\subseteq \CG_u\times[m]$, compact symmetric $K\supseteq \GU$,
and $\varepsilon>0$, there is a nonempty finite $F\subseteq E$ such that
\[
 \card{N_K^E(F)}\leq(1+\varepsilon)\card F.
\]
\end{lemma}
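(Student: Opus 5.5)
The plan is to reduce the multiset statement to fiberwise supramenability applied to suitable subsets of the single fiber $\CG_u$. For $g\in\CG_u$ put $E_g=\set{i\in[m]:(g,i)\in E}$ and $c(g)=\card{E_g}\in\set{0,\ldots,m}$. Consider the nested level sets
\[
P_j=\set{g\in\CG_u: c(g)\geq j},\qquad P_1\supseteq P_2\supseteq\cdots\supseteq P_m,
\]
where $P_1=\pi(E)\neq\emptyset$. I will only use test sets of the form $F=E\cap(F_0\times[m])$ with $F_0\subseteq P_1$ finite and nonempty. For these, $\pi(F)=F_0$, and a level-by-level count gives
\[
\card F=\sum_{j=1}^m\card{F_0\cap P_j},\qquad \card{N_K^E(F)}=\sum_{j=1}^m\card{KF_0\cap P_j}.
\]
So the lemma reduces to finding a nonempty finite $F_0\subseteq P_1$ with $\sum_j\card{KF_0\cap P_j}\leq(1+\varepsilon)\sum_j\card{F_0\cap P_j}$. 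Since $K\supseteq\GU$, we have $F_0\subseteq KF_0$, so each summand on the left dominates the corresponding one on the right. The task is therefore to keep all $m$ excesses $\card{(KF_0\setminus F_0)\cap P_j}$ small simultaneously, relative to the total weight $\card F$.

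For this I would run a finite descent over the levels, applying Definition~\ref{def:fs} (or its uniform form, Proposition~\ref{prop:pointwise and uniform supramen equivalent}) with a fixed small $\delta$, say $\delta=\varepsilon/m$, and the control set $K$. Let $j_0$ be the top nonempty level. Apply supramenability to $P_{j_0}$ to get $F_0\subseteq P_{j_0}$ with $\card{KF_0\cap P_{j_0}}\leq(1+\delta)\card{F_0}$. For this $F_0$ we have $\card F\geq j_0\card{F_0}$, since every point of $F_0$ has weight at least $j_0$, and the levels above $j_0$ contribute nothing. The only possible failure is that $\card{KF_0\cap P_j}$ is large for some $j<j_0$, meaning that $K F_0$ spills heavily into lower-weight points.

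In that case I would pass to the lower level $P_{j_0-1}$, replace $F_0$ by a supramenable set for $P_{j_0-1}$, and compare ratios. To make the comparison meaningful, I would use the weighted quantity $\card{N_K^E(F)}/\card F$ as a potential and argue that each descent step either terminates with ratio at most $1+\varepsilon$ or strictly improves a bounded quantity. Since there are only $m$ levels and the weights lie between $1$ and $m$, a pigeonhole argument over the $m$ levels should force some level $j$ at which all spillover into lower levels is at most about $\delta\card{F_0}$ per level. That level then gives $\card{N_K^E(F)}\leq\card F+m\delta\card{F_0}\leq(1+\varepsilon)\card F$.

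The main obstacle is precisely this spillover control. Supramenability applied to $P_j$ only controls $KF_0\cap P_j$, not $KF_0\cap P_i$ for the larger sets $P_i$ with $i<j$. If the descent does not close cleanly, my fallback is to apply supramenability at each level with an enlarged control set, for example a power $K^{N}$ with $N$ depending on $m$, using the uniform version to localize inside $L\gamma$. The nesting $P_i\supseteq P_j$ should then let a set that is Følner-like for $K^{N}$ at one level be corrected at the next level with bounded loss.
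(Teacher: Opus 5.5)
Your level-set reduction is correct, and the identities $\card F=\sum_j\card{F_0\cap P_j}$ and $\card{N_K^E(F)}=\sum_j\card{KF_0\cap P_j}$ are the right bookkeeping. But the proposal does not actually prove the lemma. The descent step (``each descent step either terminates \ldots\ or strictly improves a bounded quantity'', ``a pigeonhole argument \ldots\ should force some level $j$'') is never specified. The spillover problem you name as the main obstacle is left open, with only a vague fallback involving $K^N$ and level-by-level corrections. As written, nothing guarantees that a good level exists.

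The missing observation is that the obstacle arises only because you start at the top level $P_{j_0}$. It disappears if you work directly with the largest set $P_1=\pi(E)$; your descent would in fact always stop there, since there are no lower levels. Because $P_j\subseteq P_1$ for every $j$, and $F_0\subseteq KF_0$ as $K\supseteq\GU$, your identities give
\[
\card{N_K^E(F)}-\card F=\sum_{j=1}^m\card{(KF_0\setminus F_0)\cap P_j}\leq m\bigl(\card{KF_0\cap P_1}-\card{F_0}\bigr).
\]
Apply fiberwise supramenability once, to $P_1\subseteq\CG_u$ with control set $K$ and $\delta=\varepsilon/m$. This gives a nonempty finite $F_0\subseteq P_1$ for which the right-hand side is at most $\varepsilon\card{F_0}\leq\varepsilon\card F$, since each point of $F_0$ has weight at least $1$. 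So $F=E\cap(F_0\times[m])$ works, and no descent, pigeonhole, or power of $K$ is needed.

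The paper argues instead by contradiction. If every $F$ fails, iterating $F_{j+1}=N_K^E(F_j)$ from $E\cap(A\times[m])$ yields $\lambda^j\card A<\card{F_j}\leq m\card{K^jA\cap\pi(E)}$. Taking $\lambda^j>m$ then contradicts supramenability of $\pi(E)$ for the control set $K^j$. So the paper absorbs the multiplicity $m$ through exponential growth over a large power of $K$. That is close in spirit to your fallback, but applied only to $\pi(E)$. The corrected version of your argument absorbs $m$ by shrinking $\delta$ by the factor $m$, which is simpler.
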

\begin{proof}
Suppose this fails. Set $\lambda=1+\varepsilon>1$. Then every nonempty finite
$F\subseteq E$ satisfies $\card{N_K^E(F)}>\lambda\card F$.
Define $D=\pi(E)$. For a nonempty finite $A\subseteq D$, define
\[
 F_0=E\cap(A\times[m]),\qquad F_{j+1}=N_K^E(F_j).
\]
These are finite, nonempty subsets of $E$ such that $|F_0|\geq |A|$ and
$\pi(F_j)\subseteq K^jA\cap D$. Hence, for every $j\geq1$, one has
\[
 \lambda^j\card A<\card{F_j}\leq m\card{K^jA\cap D}.
\]
Choose $j$ so that $\lambda^j/m>1$ and then choose
$0<\delta<\lambda^j/m-1$. Note that the displayed inequality is valid for every such
$A$. This contradicts the fiberwise supramenability for $D$, the compact set $K^j$, and the
$\delta$.
\end{proof}

\begin{rmk}\label{rem:permanence}
We remark that the property of fiberwise supramenability passes to open subgroupoids with compact unit spaces and to $\CG_D$ for a
 compact open subset $D$ of $\GU$. In addition, fiberwise supramenability also passes
to finite amplifications $\CG\times\mathcal R_m$. To show this, first note that every compact set in $\CG\times \CR_m$ is contained in $K\times \CR_m$ for a compact $K\subset \CG$. Let $E\subset \CG_u\times [m]$ and $F\subset E$, observe that $(K\times \CR_m)\cdot F\subset N_K^E(F)$ by the definition \eqref{eq:label-neighborhood}. Now, the permanence result follows from  Lemma~\ref{lem:labels}.
\end{rmk}

We then show that the (uniform) local subexponential growth of groupoid $\CG$ will imply that $\CG$ is (uniformly) fiberwise supramenable.

\begin{defn}\label{def:growth}
For compact symmetric $K\supseteq \GU$, set
$\beta_K(j)=\sup_{u\in \GU}\card{K^j\cap G_u}$.
We say that $\CG$ has \emph{local subexponential growth},  if
\begin{equation}\label{eq:USG}
 \limsup_{j\to\infty}\beta_K(j)^{1/j}=1
 \quad\text{for every compact symmetric }K\supseteq \GU.
\end{equation}
\end{defn}

The word ``local'' refers to allowing every compact control instead of fixing
a compact generating set (if exists) for the whole groupoid.

\begin{rmk}\label{rmk: pointwise subexponential}
One may define \emph{pointwise} version for subexponential growth to be

\[\limsup_{j\to \infty}\card{K^j\cap G_u}^{1/j}=1\]
for every compact symmetric $K\supset \GU$ and every $u\in \GU$. In the similar manner as in Proposition \ref{prop:pointwise and uniform supramen equivalent}, one can actually show this pointwise version is  equivalent to Definition \ref{def:growth}. Therefore, we will not differentiate the notions.
\end{rmk}

\begin{prop}\label{prop:growth}
Let $\CG$ be a groupoid. If $\CG$ is of local subexponential growth, then $\CG$ is fiberwise supramenable.
\end{prop}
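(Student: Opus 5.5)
We argue by contradiction, mirroring the doubling argument of Lemma~\ref{lem:labels}. Fix $u\in\GU$, a nonempty $E\subseteq\CG_u$, a compact $K$ and $\varepsilon>0$. Enlarging $K$ only makes the desired inequality harder to satisfy, so we may replace $K$ by the compact symmetric set $K\cup K^{-1}\cup\GU$; this set contains $\GU$, so the growth hypothesis \eqref{eq:USG} applies to it. Set $\lambda=1+\varepsilon$. Suppose that every nonempty finite $F\subseteq E$ satisfies $\card{KF\cap E}>\lambda\card F$.

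Pick any $\gamma\in E$ and define $F_0=\{\gamma\}$ and $F_{j+1}=KF_j\cap E$. Each $F_j$ is a subset of $E\subseteq\CG_u$. Since $\GU\subseteq K$, we have $F_j\subseteq F_{j+1}$, so each $F_j$ is nonempty. Each $F_j$ is also finite. To see this, note that $K\cap\CG_v$ is finite for every unit $v$, because $K$ is compact and $\CG_v$ is discrete and closed. Hence $KF$ is finite whenever $F$ is finite. Applying the contrapositive assumption inductively gives
\[
 \lambda^j<\card{F_j}\qquad(j\geq1).
\]
On the other hand, $F_j\subseteq K^j\gamma$. Right multiplication by $\gamma$ is a bijection from $\CG_{r(\gamma)}$ onto $\CG_{s(\gamma)}=\CG_u$, so
\[
 \card{F_j}\leq\card{K^j\gamma}=\card{K^j\cap\CG_{r(\gamma)}}\leq\beta_K(j).
\]
Combining the two bounds yields $\lambda^j<\beta_K(j)$ for all $j\geq1$. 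Therefore $\limsup_j\beta_K(j)^{1/j}\geq\lambda>1$, which contradicts \eqref{eq:USG}.

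Consequently some nonempty finite $F\subseteq E$ satisfies $\card{KF\cap E}\leq(1+\varepsilon)\card F$, which is fiberwise supramenability. The only points needing care are the reduction to a symmetric $K\supseteq\GU$ and the finiteness of $K\cap\CG_v$. No uniformity in $u$ was used, so by Remark~\ref{rmk: pointwise subexponential} the pointwise growth assumption suffices as well.
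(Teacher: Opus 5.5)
Your proof is correct and follows essentially the same route as the paper. Both grow sets $F_j$ from a single $\gamma\in E$ by multiplying by $K$ and intersecting with $E$. If every stage fails, then $\card{F_j}>(1+\varepsilon)^j$, while $\card{F_j}\leq\card{K^j\cap\CG_{r(\gamma)}}\leq\beta_K(j)$. The only difference is cosmetic: the paper fixes $N$ with $\beta_K(N)<(1+\varepsilon)^N$ in advance and stops at $F_N$, which also shows that $L=K^N$ witnesses the uniform version, whereas your limsup contradiction gives the pointwise version (equivalent by Proposition~\ref{prop:pointwise and uniform supramen equivalent}); your explicit symmetrization of $K$ and your finiteness check on $F_j$ are details the paper leaves implicit.
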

\begin{proof}
Fix a compact $K\supset \GU$ and $\varepsilon>0$. In the uniform case choose $N\geq1$ with
\[
 \beta_K(N)<(1+\varepsilon)^N.
\]
Given $\emptyset\ne E\subseteq G_u$, choose $\gamma\in E$ and set
$F_j=E\cap K^j\gamma$ for $0\leq j\leq N$. Then
$F_0=\set\gamma$ and
\[K\cdot F_j\cap E=K\cdot(E\cap K^j\gamma)\cap E\subset K\cdot K^j\gamma\cap E=F_{j+1}.\]
Suppose none of $F_j$ for $0\leq j\leq N-1$ satisfies 
\[|KF_j\cap E|\leq (1+\varepsilon)|F_j|.\]
Then necessarily one has $|F_{j+1}|\geq |KF_j\cap E|>(1+\varepsilon)|F_j|$. Therefore, by induction, one has $|F_N|>(1+\eps)^N$. 

But right multiplication by $\gamma$
is a bijection from $K^N\cap G_{r(\gamma)}$ to $K^N\gamma$. This implies that
\[
 \card{F_N}\leq\card{K^N\cap G_{r(\gamma)}}\leq\beta_K(N)<(1+\eps)^N,
\]
which is a contradiction. Since $K^j\subseteq K^N$ for $j\leq N$, the compact
set $L=K^N$ witnesses that $\CG$ is uniformly fiberwise supramenable.
\end{proof}

Using the language of length functions on groupois, we have the following.  Roughly speaking, a length function $\ell: \CG\to [0, \infty)$ is a function that vanishes on units; is invariant under inversion; and
is subadditive on composable pairs. We refer to, e.g., \cite{AB} and \cite{MW} for more information on coarse length functions. We remark that the uniformly fiberwise coarse length function used in \cite[Definition 2.4]{AB} satisfies the assumption of the following corollary.

\begin{cor}\label{cor:length}
Suppose $\CG$ admits a length function $\ell:G\to[0,\infty)$ that is controlled (i.e., bounded
on compact sets) and
\[
 \gamma_\ell(t)=\sup_{u\in \GU}\card{\set{g\in \CG_u:\ell(g)\leq t}}
\]
is finite for every $t$. Suppose 
$\log\gamma_\ell(t)/t\to0$ as $t\to\infty$. Then $\CG$ is of local subexponential growth and thus fiberwise supramenable.
\end{cor}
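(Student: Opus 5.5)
The plan is to reduce to Proposition~\ref{prop:growth}: show that $\CG$ has local subexponential growth in the sense of Definition~\ref{def:growth}. Fix a compact symmetric $K\supseteq\GU$. Since $\ell$ is controlled, $c:=\sup_{k\in K}\ell(k)<\infty$. Every element of $K^j\cap\CG_u$ is a product $k_1\cdots k_j$ of composable elements of $K$. By subadditivity,
\[
\ell(k_1\cdots k_j)\leq\sum_{i}\ell(k_i)\leq jc .
\]
Hence $K^j\cap\CG_u\subseteq\set{g\in\CG_u:\ell(g)\leq jc}$ for every $u\in\GU$. Taking the supremum over $u$ gives $\beta_K(j)\leq\gamma_\ell(jc)$, and this is finite by hypothesis.

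Next I estimate the growth rate. If $c=0$, then $\beta_K(j)\leq\gamma_\ell(0)$ is bounded, so $\beta_K(j)^{1/j}\to1$. (Note $\beta_K(j)\geq1$, since $u\in K^j\cap\CG_u$ because $\GU\subseteq K$.) If $c>0$, then
\[
\frac{\log\beta_K(j)}{j}\leq c\cdot\frac{\log\gamma_\ell(jc)}{jc}\longrightarrow0
\]
as $j\to\infty$, by the hypothesis $\log\gamma_\ell(t)/t\to0$. Combined with $\beta_K(j)\geq1$, this gives $\limsup_j\beta_K(j)^{1/j}=1$. So \eqref{eq:USG} holds for every compact symmetric $K\supseteq\GU$, which is local subexponential growth. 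Proposition~\ref{prop:growth} then yields fiberwise supramenability.

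The only point needing care is the bound $\ell\leq jc$ on $K^j$. This uses that $\ell$ is subadditive on composable pairs and that $\ell$ vanishes on units, which covers factors lying in $\GU$. Everything else is a direct limit computation, so I expect no real obstacle.
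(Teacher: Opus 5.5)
Your proof is correct and follows the paper's argument: you bound $\ell$ by a constant $c$ on $K$, use subadditivity to get $\beta_K(j)\leq\gamma_\ell(cj)$, and then invoke Proposition~\ref{prop:growth}. The only cosmetic differences are that the paper simply takes $c\geq1$, so your separate treatment of $c=0$ becomes unnecessary, and that you write out the limit computation the paper leaves implicit.
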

\begin{proof}
Given a symmetric compact $K\supset \GU$, choose $c\geq1$ with $\ell\leq c$ on $K$.
Subadditivity of the length function implies $\beta_K(j)\leq\gamma_\ell(cj)$. Therefore, $\CG$ is of local subexponential growth and thus fiberwise supramenable by  Proposition~\ref{prop:growth}.
\end{proof}

\section{Clopen matchings and controlled expansion}\label{sec:matching}
In this section, we apply the techniques on clopen matchings developed in \cite{GL} to ample groupoids. 
\subsection{Finite clopen presentations}
A bipartite multigraph consisted of two disjoint vertex sets $L$ and $R$, an edge set $E$ as well as maps $s: E\to L$ and $r:E\to R$. In our setting, $L$ and $R$ are compact zero-dimensional Hausdorff spaces. A
\emph{finite clopen presentation} of a bipartite multigraph consists of
finitely many homeomorphisms
\[
 \phi_i:D_i\longrightarrow V_i,
\]
for $i\in I$ where $D_i\subseteq L$ and $V_i\subseteq R$ are clopen and $\phi_i$ is referred to as a \textit{chart}. The edge space is identified with
$\bigsqcup_i\set i\times D_i$ via a homeomorphism $\Psi$, with endpoints $x$ and $\phi_i(x)$. For $A\subseteq L$, write $N(A)$
for its set of right neighbors, that is, $N(A) = \bigcup_{i \in I} \phi_i (A \cap D_i)$.

A matching $M$ is a subset of the edge set $E$ such that $s|_M$ and $r|_M$ are injective.  A matching $M$ is \emph{clopen} if it is clopen in $E$. It determines a partial
homeomorphism $M:\dom M\to\ran M$ with clopen domain and range.
An \textit{augmenting path} starts at an unmatched left vertex, ends at an unmatched
right vertex, and alternates between edges outside and inside the matching. This means a sequence of length $2k+1$ with vertices
\[x_0, y_0, x_1, y_1,\dots, x_k, y_k, \qquad x_i\in L, y_i\in R\]
such that 
\begin{enumerate}[label=(\roman*)]
    \item $x_0\notin \dom M$ and $y_k\notin \ran M$;
    \item there are edges $e_i\notin M$ for $0\leq i\leq k$ with $s(e_i)=x_i$ and $r(e_i)=y_i$;
\item there are edges $m_i\in M$ for $1\leq i\leq k$ with $s(m_i)=x_i$ and $r(m_i)=y_{i-1}$.
\end{enumerate}
We finally make the multigraph $(L, R, E)$ oriented with respect to a matching  $M$ in the following sense. We orient all edges from left to right and adjoin the
reverse of every edge in the matching $M$. It is straightforward to see a directed walk from an unmatched left vertex
to an unmatched right vertex can be shortened to an augmenting path. We refer to  \cite[Section 2.6]{GL} for more details on multigraphs and clopen matchings.

We use the following established augmentation lemma. It is stated here to
specify both the regularity and monotonicity that the proofs require.

\begin{lemma}[{\cite[Lemma 3.2]{GL}}]\label{lem:augmentation}
For a finite clopen presentation and a clopen matching $M_0$, there are
clopen matchings $M_h$ for $h \in \mathbb{N}$ satisfying
\[
 \dom M_h\subseteq\dom M_{h+1}\quad \text{ and } \quad
 \ran M_h\subseteq\ran M_{h+1} \quad \text{ for any } h \in \mathbb{N},
\]
and $M_h$ has no augmenting path of length at most $2h-1$ for $h\geq1$.
\end{lemma}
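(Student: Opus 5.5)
The plan is a clopen version of the classical augmenting-path (Hopcroft--Karp) scheme, following \cite{GL}: we build the $M_h$ inductively, and at stage $h$ we augment simultaneously along a clopen, pairwise disjoint family of shortest augmenting paths. Put $M_1=M_0$; nothing is required of $M_0$ itself. The case $h=1$ is automatic, since a length-one augmenting path is a single edge $x_0\to y_0$ joining an unmatched left vertex to an unmatched right vertex, and this is handled by the same step described next.

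\textbf{Step 1: augmenting paths form a clopen object.} Fix a clopen matching $M$ and a length bound $\ell$. A candidate path is determined by its initial vertex $x_0$ together with a finite word of chart indices $(i_0,j_1,i_1,\dots)$, where the $i$'s label non-matching edges and the $j$'s label matching edges, which are traversed backwards. Each word yields a partial homeomorphism (a composition of the $\phi_i$, of $M^{-1}$ restricted to the relevant pieces, and so on) with clopen domain. The conditions defining an augmenting path are all clopen: $x_0\notin\dom M$, $y_k\notin\ran M$, $e_i\notin M$ (note $M$ is clopen in $E\cong\bigsqcup_i\{i\}\times D_i$), and $m_i\in M$. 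Since only finitely many words of length $\le 2\ell-1$ exist, the set $P_\ell$ of starting points of augmenting paths of length at most $2\ell-1$ is clopen, and it is partitioned into finitely many clopen pieces, one for each word type.

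\textbf{Step 2: a maximal disjoint family and augmentation.} Among the paths of minimal length $2k-1$, we want a clopen family of vertex-disjoint paths that is \emph{maximal}, meaning every other shortest augmenting path meets one of them. By zero-dimensionality and compactness, choose finitely many clopen pieces, each small enough that the paths starting there are mutually vertex-disjoint and of a fixed word type. Then run a greedy procedure over a finite clopen partition. Enumerate the pieces; at each step, add the clopen set of starting points whose paths avoid every vertex used so far. The used set is the union of finitely many images of clopen sets under partial homeomorphisms, hence clopen. Next, replace $M$ by the symmetric difference $M\triangle(\text{edges of chosen paths})$. This is again a clopen matching. The left endpoints $x_0$ enter the domain, and the right endpoints $y_k$ enter the range. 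Interior vertices stay matched, so $\dom$ and $\ran$ only grow, which gives the required monotonicity.

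\textbf{Step 3: the length increases.} This step is the main obstacle. We need the standard Hopcroft--Karp lemma: after augmenting along a maximal vertex-disjoint set of shortest augmenting paths, every new augmenting path is strictly longer. Its proof is pointwise and combinatorial. It uses the symmetric-difference argument and only looks at the finitely many vertices lying on a given path, so it transfers verbatim once maximality holds pointwise.

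The difficulty is that the greedy choice in the clopen setting must be genuinely maximal at every point, and a single pass over a partition may not achieve this. I would handle it by iterating the greedy pass: each pass either adds nothing, in which case we have maximality, or strictly enlarges the matched domain on a clopen set. I would then argue that finitely many passes suffice. I am less sure of this part; as an alternative, I would use the fact that path length is bounded and that the relevant vertex configurations lie in finitely many local patterns, which allows a finite coloring (via disjointness of small pieces) in which each color class is processed once.

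After at most $h$ rounds of augmentation, none of which reduces $\dom$ or $\ran$, the resulting matching $M_{h+1}$ has no augmenting path of length $\le 2h+1$. This yields the whole sequence.
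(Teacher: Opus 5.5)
The paper does not prove this lemma; it imports it from \cite[Lemma 3.2]{GL}. Your scheme is the standard way to prove such a statement, and it goes through: one Hopcroft--Karp round per odd length, each round flipping a clopen, vertex-disjoint, maximal family of shortest augmenting paths. However, you have put the difficulty in the wrong place, and the points that actually need an argument are left unargued.

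The step you call the main obstacle is not one: the single greedy pass of Step~2 already gives a maximal family. Partition the pairs (word type, starting point) into clopen pieces $P_1,\dots,P_N$ inside which distinct paths are vertex-disjoint. At step $j$, keep exactly the paths of $P_j$ that avoid the vertices chosen at earlier steps. Then any shortest augmenting path that is not kept lies in some $P_j$, and it was rejected at step $j$ precisely because it meets a path already chosen. So drop the iteration, whose termination you could not justify anyway.

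What does need proof is that such pieces exist, and this is where ``shortest'' is used. Fix a word type $w$ with vertex maps $v_0,v_1,\dots$; these are injective partial homeomorphisms of the starting point, with clopen domains and images. Two distinct paths of type $w$, starting at $x\neq x'$, can meet only if $x'=\psi_{a,b}(x):=v_b^{-1}(v_a(x))$ for some $a\neq b$. So you need every $\psi_{a,b}$ to be fixed-point free, i.e.\ every path of type $w$ to be simple. This holds because at round $h$ the current matching has no augmenting path of length $<2h-1$, and an augmenting walk with a repeated vertex can be shortened. Hausdorffness, zero-dimensionality and compactness then give the partition. If fixed points were allowed, a fixed point of some $\psi_{a,b}$ that is a limit of non-fixed points would have no conflict-free clopen neighbourhood.

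Two smaller repairs are also needed.

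\textbf{Indexing.} Setting $M_1=M_0$ violates the requirement for $h=1$ whenever some edge joins two $M_0$-unmatched vertices, so your claim that this case is ``automatic'' is false. Instead, take $M_h$ to be $M_{h-1}$ flipped along a maximal family of augmenting paths of length exactly $2h-1$ (possibly empty). This gives $h$ rounds for $M_h$, not $h$ rounds for $M_{h+1}$.

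\textbf{Localizing Step~3.} The classical Hopcroft--Karp counting compares sizes of matchings, which are infinite here, so it does not transfer ``verbatim.'' Localize it as follows. Let $P$ be an $M'$-augmenting path of length $\le 2h-1$, and let $\mathcal{Q}$ be the finitely many chosen paths meeting $P$. Then $P$ is also augmenting for $M\triangle E(\mathcal{Q})$: its edges lying in chosen paths lie in paths of $\mathcal{Q}$, and its endpoints are unmatched there. Now run the finite argument on the finite edge set $E(\mathcal{Q})\triangle E(P)$. This gives $\card{P}\geq 2h-1+2\card{E(P)\cap E(\mathcal{Q})}$. If $\mathcal{Q}=\varnothing$, the contradiction comes from maximality. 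Otherwise $P$ must use the $M'$-edge of a vertex it shares with some $Q\in\mathcal{Q}$, and that edge lies in $E(Q)$.

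With these repairs your argument proves the lemma. Monotonicity of $\dom$ and $\ran$, and clopenness of the flipped matchings, hold as you say.
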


The lemma uses compact zero-dimensional spaces, with no metrizability
assumption.

\subsection{Graphs carried by compact open bisections}
In this subsection, we construct our multigraph. Let $L$ and $R$ be disjoint compact open subsets of
$\GU\times[m]$. We say that a finite clopen presentation is \emph{carried by compact open bisections in
$\CG$} if each chart $\phi_i$ is induced by a compact open bisection of
$\CG\times\mathcal R_m$ in the sense that there exists a compact open bisection $B_i=U_i\times\{(p_i, q_i)\}\subset \CG\times \CR_m$ such that $s(B_i)=D_i$, $r(B_i)=V_i$ and $\phi_i=r|_{B_i}\circ (s|_{B_i})^{-1}$. In this case, define a compact symmetric $K=\GU\cup\bigcup_{i\in I} (U_i\cup U^{-1}_i)$. Then the edges in the multigraph are implemented by elements in $K$.

\begin{theorem}\label{thm:hall}
Let $G$ be a fiberwise supramenable ample groupoid. Let a finite clopen presented bipartite graph $(L, R, E)$ be carried by compact open bisections in $\CG$. Suppose that
there is $\lambda>1$ such that
\begin{equation}\label{eq:strict-hall}
 \card{N(F)}\geq\lambda\card F
 \quad\text{for every nonempty finite }F\subseteq L.
\end{equation}
Then the graph has a clopen matching covering every left vertex.
\end{theorem}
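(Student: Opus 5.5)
We follow the Glasner--Liu strategy: run the augmentation process of Lemma~\ref{lem:augmentation} and use fiberwise supramenability, in its labelled form Lemma~\ref{lem:labels}, to show that long augmenting paths cannot be needed. Start from $M_0=\emptyset$ and let $M_h$ be the clopen matchings given by Lemma~\ref{lem:augmentation}. Each $\dom M_h$ is clopen and the domains increase. It therefore suffices to find a single $h$ with $\dom M_h=L$, since then $M_h$ is the desired clopen matching covering every left vertex.

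Fix the compact symmetric $K\supseteq\GU$ attached to the presentation. First we localize to fibers. For $x=(v,p)\in L\subseteq\GU\times[m]$, every edge from $x$ goes to $(r(g),q)$ for some $g\in U_i$ with $s(g)=v$. Thus every vertex reachable from $x$ by a walk of length $j$ in the oriented graph has the form $(r(g),q)$ with $g\in K^j v$. Choose a base point $v$ in the orbit and identify the vertices in the orbit with points of $\CG_v\times[m]$ (first coordinate $g$, so the unit $r(g)$ is recovered as $r(g)$). Left vertices, right vertices and edges then become subsets of $\CG_v\times[m]$, and $N(F)\subseteq N_{K}^{R_v}(F)$ in the notation \eqref{eq:label-neighborhood}. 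There is a subtlety: $\CG_v\to$ orbit is not injective when isotropy is nontrivial. We handle this by pulling everything back to $\CG_v$; matchings and neighborhoods lift, and the Hall inequality \eqref{eq:strict-hall} for finite sets downstairs transfers to finite sets upstairs after restricting to a transversal.

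The key step is the bound. Suppose $x_0\in L\setminus\dom M_h$. Let $A_j$ be the set of left vertices reachable from $x_0$ by alternating walks of length $\le 2j$, and let $B_j=N(A_j)$. Since $M_h$ has no augmenting path of length $\le 2h-1$, for every $j<h$ each vertex of $B_j$ is matched, and its partner lies in $A_{j+1}$. Hence $|A_{j+1}|\ge |B_j|+1\ge\lambda|A_j|$. This produces exponential growth of the explored sets, but only up to step $h$, and exponential growth alone does not contradict supramenability, which permits exponential growth in general. We therefore need the form of the statement that fits supramenability. Take $E=A_h$, viewed inside $\CG_v\times[m]$, and let $K'$ be a compact set containing $K^2$ and accounting for both the left-to-right and matching steps. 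Apply the uniform version, Proposition~\ref{prop:pointwise and uniform supramen equivalent} together with Lemma~\ref{lem:labels} on the amplification (Remark~\ref{rem:permanence}), with $1+\varepsilon<\lambda$. This gives a compact $L'$, independent of $v$ and $x_0$, and a finite $F\subseteq E\cap L'x_0$ with $|N_{K'}^{E}(F)|\le(1+\varepsilon)|F|$.

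Now choose $h$ larger than the word length of $L'$ in $K'$. Then $F$ lies deep inside the explored region $A_{h-1}$, so every right neighbor of $F$ is matched and its partner lies in $N_{K'}^E(F)$. Because the matching is injective, $|N(F)|\le|N_{K'}^E(F)|\le(1+\varepsilon)|F|<\lambda|F|$, which contradicts \eqref{eq:strict-hall}. Hence $\dom M_h=L$ for this uniform $h$. The main obstacle is making this step precise: the partner map sends $N(F)$ injectively into $N_{K'}^E(F)$, but $N_{K'}^E(F)$ should be compared with $|F|$, not with $|N(F)|$. If the straightforward count fails, the fix is to apply supramenability to the set of right vertices $E'=B_{h-1}$, with control set $K^2$, and to use that the partners of $N(F')$ together with $F'$ give the inequality $|N(F')|\ge\lambda|\text{partners}|$. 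We also need to check that the control set and the base point are chosen uniformly, which is exactly what Proposition~\ref{prop:pointwise and uniform supramen equivalent} provides.
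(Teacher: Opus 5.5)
There is a genuine gap in your key step: the sentence ``choose $h$ larger than the word length of $L'$ in $K'$; then $F$ lies deep inside $A_{h-1}$'' is false. The sets $A_j$ are balls for the \emph{alternating} distance determined by $M_h$; they are not groupoid balls. Each alternating step moves by an element of $K^2$, so groupoid distance from $x_0$ is only a \emph{lower} bound for alternating distance. A point $gx_0$ with $g\in L'$ may be reachable only by a long alternating walk that wanders off and comes back. Such a point lies in the outer layer $A_h\setminus A_{h-1}$, however large $h$ is. (Moreover, $L'$ need not lie in any power of $K'$.)

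Supramenability applied to $E=A_h$ gives no control over whether $F$ meets this outer layer. For points of $F$ in that layer, a right neighbour may be unmatched, since augmenting paths of length $2h+1$ are not excluded. It may also be matched to a left vertex outside $E$. In either case the injection of $N(F)$ into $N^E_{K'}(F)$ fails. The chain $\lambda|F|\le|N(F)|\le|N^E_{K'}(F)|\le(1+\eps)|F|$ that worried you is fine once $F\subseteq A_{h-1}$. The real obstacle is the boundary of $E$, and your fallback $E'=B_{h-1}$ has exactly the same boundary problem.

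The paper removes the boundary instead of controlling it. If every $U_h=L\setminus\dom M_h$ is nonempty, these decreasing compact sets share a point $v$. On the countable component of $v$, a subsequence of the $M_h$ converges edgewise to a matching $M$ with no finite augmenting path at all. Hence the full reachable set $T$ satisfies $M^{-1}(N(T))\subseteq T$. Every finite $F\subseteq T$ then has $|M^{-1}(N(F))|\ge\lambda|F|$, with these partners in $T$ and within $K^2$ of $F$. Lemma~\ref{lem:labels} applied to $E=\widetilde T$ gives the contradiction. This uses only pointwise supramenability and yields no explicit $h$.

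Your finite-stage route can be repaired by a pigeonhole argument in place of word length. Work with the lifted layers in $\CG_v\times[m]$. The number $N=\sup_\gamma|L'\gamma|$ is finite, because $L'$ is covered by finitely many bisections. So for $h\ge N+1$, some $1\le j\le N+1$ has $(A_j\setminus A_{j-1})\cap L'x_0=\varnothing$. Taking $E=A_j$ forces $F\subseteq A_{j-1}$, so $N(F)$ is entirely matched into $E$. Your inequality chain then goes through and even gives a uniform $h$.

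Separately, the isotropy step is not right as stated. Restricting a finite upstairs set $F$ to a transversal only yields $|N(\rho(F))|\ge\lambda|\rho(F)|$. This loses multiplicity when $F$ contains several lifts of one vertex. A transversal is also not stable under the lifted partner map, since that map changes lifts by isotropy elements. You need the Hall inequality for all finite subsets of the full preimage. The paper gets it from the count $|N(F)|\ge\sum_{q\ge1}|N(A_q)|\ge\lambda|F|$, where $A_q=\{t:|F\cap\rho^{-1}(t)|\ge q\}$.
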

\begin{proof}
Apply Lemma~\ref{lem:augmentation} to the empty matching and write
$U_h=L\setminus\dom M_h$ for $h\geq 0$. Note that $\{U_h: h\geq 0\}$ is a decreasing sequence. Thus, if no $M_h$ covers $L$, then the these compact
sets $U_h$ have a common point $v$.

Consider the graph component $\mathcal C$ containing $v$. Because the presentation of the multigraph is finite and hence all degrees are bounded, the component $\CC$ is countable. By compactness of $\set{0,1}^{E(\mathcal C)}$,  pass to a subsequence if necessary, one may assume 
matchings $1_{M_h|_\CC}$ converges on each edge of $\mathcal C$ to a matching on $\CC$, denoted by $M$.  The vertex $v$ is
unmatched because it was unmatched at every stage. Moreover $M$ has no
finite augmenting path. Indeed, any finite augmenting path $p$  and the unmatched status of
its endpoints depend on finitely many edge coordinates. Those coordinates
eventually agree with the subsequence, contradicting the absence of short
augmenting paths at sufficiently large stages.

Now, define 
\[T=\{u\in L\cap \CC: u\text{ is reachable from } v \text{ by a directed walk in } (L, R, E).  \}\]
Here a direct walk is a walk on the multigraph with respect to the orientation described above. Note that every vertex of $N(T)$ is matched. Otherwise, let $u\in T$ be such that $\phi_i(u)\in N(T)$ for some $i\in I$ is an unmatched vertex. By definition and deleting repeated vertices if necessary, there is a path $v=x_0, y_0, x_1, y_1,\dots, x_n=u, y_n=\phi_i(u)$, which is actually an augmented path. This is a contradiction and thus one has
\[
 M^{-1}(N(T))\subseteq T.
\]
For every finite nonempty $F\subseteq T$, the inverse matching is injective
on $N(F)$. Therefore, one has
\begin{equation}\label{eq:reachable-expansion}
 \card{M^{-1}(N(F))}=\card{N(F)}\geq\lambda\card F.
\end{equation}
 Write $v=(x,p)$ for a $x\in \GU$ and $p\in[m]$. We now lift the multigraph restricting to $\CC$ onto the fiber $\CG_x\times [m]$. By definition,  All underlying units from $\CG$ involved in the vertices of $\mathcal C$ lie in
$r(\CG\cdot x)$. Define a map:
\[\rho: \CG_x\times [m]\to r(\CG x)\times [m]\]
by $\rho(\gamma, j)=(r(\gamma), j)$. Then for any vertex $t=(y, j)\in T$, one has $\rho^{-1}(t)=\CG^y_x\times\{j\}$. 
Define $\widetilde{T}=\rho^{-1}(T)$. Suppose $e\in E$ is an edge in the multigraph with endpoints $t=(y, j)$ and $(z, k)$. Then because the multigraph $(L, R, E)$ is finite clopen presented, then there is a compact open bisection $U$ such that $g_e\in U$ with $s(g_e)=y$ and $r(g_e)=z$. In this case, we can define the lifted edge by
\[(e, \gamma): (\gamma, j)\to(g_e\gamma, k)\]
for any $\gamma\in \CG^y_x$. Then for the matching $M$, we lift it to
\[\widetilde{M}=\{(e, \gamma): e\in M, \gamma\in \CG^{r(\gamma)}_x\},\]
which is still a matching such that $\dom(\widetilde{M})=\rho^{-1}(\dom (M))$ and $\ran(\widetilde{M})=\rho^{-1}(\ran(M))$.

We then claim $|N(F)|\geq \lambda|F|$ for any finite $\emptyset\ne F\subset \widetilde{T}$. To show this, let $F\subset \widetilde{T}$ be non-empty and define a function $f(t)=|F\cap \rho^{-1}(t)|$. First observe $N(F)\subset \rho^{-1}(N(\rho(F)))$.
Note that for any right vertex $w\in N(\rho(F))$, and any neighborhood $t\sim w$, there are $f(t)$ elements in $F\cap \rho^{-1}(t)$ connecting to $f(t)$ distinct lifts of $w$.  This implies that
\[|N(F)\cap \rho^{-1}(w)|\geq \max_{t\sim w}f(t)\]
and therefore one has
\[|N(F)|\geq \sum_{w\in N(\rho(F))}\max_{t\sim w}f(t).\]
Then define $A_q=\{t\in T:f(t)\geq q\}$ for each integer $q\geq 1$. These are finite sets and only finitely many are non-empty. Then it is straightforward to see $w\in N(A_q)$ if and only if $\max_{t\sim w}f(t)\geq q$. This entails that 
\[\sum_{w\in N(\rho(F))}\max_{t\sim w}f(t)=\sum_{q\geq 1}|N(A_q)|.\]
Then by the assumption \eqref{eq:strict-hall}, one has
\[|N(F)|\geq \sum_{w\in N(\rho(F))}\max_{t\sim w}f(t)=\sum_{q\geq 1}|N(A_q)|\geq \sum_{q\geq 1}\lambda|A_q|=\lambda\sum_{t\in T}f(t)=\lambda |F|.\]

Finally, note that $N(F)\subset \rho^{-1}(N(\rho(F)))\subset \rho^{-1}(\ran(M))=\ran(\widetilde{M})$. This implies that 
 \[\card{\widetilde{M}^{-1}(N(F))}=\card{N(F)}\geq\lambda\card F.\]
one may identify $\widetilde{M}^{-1}(N(F))\subset N^{\widetilde{T}}_{K^2}(F)$ and thus
 \eqref{eq:reachable-expansion} implies
\[
 \card{N_{K^2}^{\widetilde T}(F)}\geq\lambda\card F.
\]
This contradicts Lemma~\ref{lem:labels} with $0<\varepsilon<\lambda-1$.
Hence some $U_h$ is empty, and the corresponding clopen matching covers $L$.
\end{proof}

\begin{lemma}\label{lem:witness-graph}
If $(n+1)a\leq nb$ in $S(\CG)$, there are disjoint compact open
representatives $A,B$ of $a,b$ and a finite clopen graph carried by compact open bisections of $\CG$, with
left vertex space $A$ and right vertex space $B$, such that
\[
 \deg(u)=n+1\ (u\in A),\qquad \deg(v)\leq n\ (v\in B).
\]
In particular, \eqref{eq:strict-hall} holds with $\lambda=(n+1)/n$.
\end{lemma}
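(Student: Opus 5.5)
The approach is to unwind the inequality $(n+1)a\leq nb$ via \eqref{eq:subequiv} into a single compact open bisection of $\CG^{\st}$, then cut it into finitely many bisections of product form $U\times\{(p,q)\}$; these pieces will be the charts of the graph.

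First fix representatives $A_0$ of $a$ and $B_0$ of $b$, compact open in $\GU\times\N$ and supported on finitely many labels. Form $(n+1)a$ by placing copies $A_0^{(k)}$, $k=0,\dots,n$, of $A_0$ on pairwise disjoint label blocks. Form $nb$ by placing copies $B_0^{(l)}$, $l=1,\dots,n$, of $B_0$ on disjoint label blocks. By \eqref{eq:subequiv} there is a compact open bisection $W\subseteq\CG^{\st}$ with $s(W)=\bigsqcup_k A_0^{(k)}$ and $r(W)\subseteq\bigsqcup_l B_0^{(l)}$. Since $W$ is compact open and $\CG^{\st}=\CG\times\CR$ with $\CR$ discrete, $W$ meets only finitely many slices $\CG\times\{(p,q)\}$. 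Each slice $W\cap(\CG\times\{(p,q)\})$ has the form $U\times\{(p,q)\}$ with $U$ a compact open bisection of $\CG$. Choose $m$ so large that every label involved lies in $[m]$. Then retag, using the equivalence relation defining $S(\CG)$, so that $A:=A_0^{(0)}$ and $B:=B_0^{(1)}$ sit on disjoint label sets. This makes $A$ and $B$ disjoint compact open subsets of $\GU\times[m]$, as required for the bipartite setup.

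Next define the charts. For each $k$, let $\alpha_k:A\to A_0^{(k)}$ be the canonical relabelling homeomorphism; it is implemented by the bisection $s(A)\times\{(k\text{-label},0\text{-label})\}$, or more precisely by the unit bisection over $A$ with shifted labels. For each $l$, let $\beta_l:B_0^{(l)}\to B$ be the analogous relabelling. For $x\in A$ and each $k$, the point $\alpha_k(x)$ lies in $s(W)$. Apply $W$ to it, landing in a unique $B_0^{(l)}$, and then apply $\beta_l$. Concretely, the charts are indexed by $(k,l,\text{slice})$. The domain of each chart is the clopen set of those $x\in A$ whose $k$-th copy is moved by that slice of $W$ into block $l$. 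The chart map is
\[
x\mapsto \beta_l\bigl(r|_W\circ(s|_W)^{-1}(\alpha_k(x))\bigr).
\]
This composite is induced by a compact open bisection of the form $U\times\{(p,q)\}$, because composing product bisections with relabelling bisections keeps the $\CG$-component $U$ and only changes labels. For fixed $k$, the domains over all $(l,\text{slice})$ partition $A$ into clopen pieces. Hence each $x$ has exactly one edge for each $k$, giving $\deg(x)=n+1$.

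For the right-hand degrees, take $v\in B$. An edge ending at $v$ with index $l$ corresponds to a point of $r(W)\cap B_0^{(l)}$ lying over $\beta_l^{-1}(v)$. Since $W$ is a bisection, each of the $n$ points $\beta_l^{-1}(v)$ has at most one $W$-preimage. Therefore $\deg(v)\leq n$.

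For the Hall estimate, let $F\subseteq A$ be finite. Counting edges gives
\[
(n+1)|F| = \#\{\text{edges out of }F\}\leq n|N(F)|,
\]
which is \eqref{eq:strict-hall} with $\lambda=(n+1)/n$.

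The main obstacle is purely bookkeeping. The edges must be genuinely distinct even when two charts send $x$ to the same vertex, which is why the edge space is the disjoint union $\bigsqcup_i\{i\}\times D_i$ and why multigraphs are allowed. One also needs the representatives and retagging to be compatible with the requirement that the carrying bisections have product form.
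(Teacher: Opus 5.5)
Your proposal is correct and follows essentially the same route as the paper. The paper packages the witness of $(n+1)a\leq nb$ as a clopen injection $J\colon A\times[n+1]\to B\times[n]$ and takes $A\times[n+1]$ as the edge set. Your decomposition of $W$ into product slices composed with relabellings is just an explicit unpacking of that injection. The degree counts and the double-counting Hall estimate are then identical to the paper's.
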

\begin{proof}
Choose $m\in \N$ large enough and let $A, B\subset \GU\times [m]$ be disjoint compact open sets such that $[A]=a$ and $[B]=b$.
By \eqref{eq:subequiv}, the
inequality $(n+1)a\leq nb$ is witnessed by a finite clopen injection
\[
 J:A\times[n+1]\longrightarrow B\times[n]
\]
implemented piecewise by compact open bisections in $\CG\times \CR_k$ for a large integer $k$. This implies $n+1$ copies of $A$ can be moved into $n$ copies of $B$ with disjoint images. We now construct the multigraph as follows.

Define $L=A$, $R=B$ and $E=A\times [n+1]$ such that if $J(u, i)=(v, j)$ then the edge $(u, i)\in A\times [n+1]$ connects $u$ and $v$. Because $J$ is implemented by finitely many compact open bisections, it can be verified that the current $(L, R, E)$ has a finite clopen presentation and is carried by compact open bisections in $\CG$.

We finally calculate the degrees of vertices.
Let $u\in A$. Then the edge with $u$ as the left endpoint are exactly $(u,1),\dots, (u, n+1)$. This implies that $\deg(u)=n+1$. For $v\in B$, the edge with right endpoint $v$ come from $J^{-1}(\{v\}\times [n])$. Since $J$ is injective, one has $\deg(v)\leq n$.
Finally, for a finite $F\subseteq A$, exactly $(n+1)|F|$ many edges has left endpoints in $F$. The right endpoints of these edges are in $N(F)$, each of which has degree bounded by $n$. Therefore, one has
$(n+1)\card F\leq n\card{N(F)}$.
\end{proof}

Now we are ready to prove Theorem~\ref{thm:A}.
\begin{proof}[Proof of Theorem~\ref{thm:A}]
Given $(n+1)a\leq nb$ and write $[A]=a$ and $[B]=b$ with $A, B$ are disjoint compact open sets in $\GU\times [m]$ for some $m$. It follows from Lemma~\ref{lem:witness-graph} 
and
Theorem~\ref{thm:hall} to obtain a clopen matching $M$ such that $A\subset\dom(M)$. Since the matching $M$ is implemented by finitely many  compact open bisections, one has $a\leq b$. Then if $\CG$ is minimal and $\sigma$-compact, then it follows from \cite[Proposition 2.10]{ABBL} that $\CG$ has groupoid strict comparison.
\end{proof}

The following is a direct corollary of Proposition~\ref{prop:growth} and Theorem~\ref{thm:A}.

\begin{cor}\label{cor:growth-au}
Let $\CG$ be an ample groupoid on a compact unit space. If $\CG$ has pointwise
local subexponential growth, then $S(\CG)$ is almost unperforated.
\end{cor}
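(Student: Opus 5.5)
The corollary is a direct combination of two earlier results. I would route it through Remark~\ref{rmk: pointwise subexponential}, Proposition~\ref{prop:growth} and Theorem~\ref{thm:A}.

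\emph{From pointwise to local growth.} By Remark~\ref{rmk: pointwise subexponential}, pointwise local subexponential growth is equivalent to local subexponential growth in the sense of Definition~\ref{def:growth}. So we may assume that $\limsup_j \beta_K(j)^{1/j}=1$ for every compact symmetric $K\supseteq\GU$.

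This equivalence is the only potential obstacle, since the remark states it without proof. If one prefers to avoid it, there is a more direct route. The proof of Proposition~\ref{prop:growth} only uses the fiber $\CG_{r(\gamma)}$ for the chosen $\gamma\in E$. Given $u$, $E$, $K$ and $\varepsilon$, fix $\gamma\in E$ and put $w=r(\gamma)$. Pointwise growth at $w$ gives some $N$ with $\card{K^N\cap\CG_w}<(1+\varepsilon)^N$. Running the same chain $F_j=E\cap K^j\gamma$, $0\leq j\leq N$, then yields some $j$ with $\card{KF_j\cap E}\leq(1+\varepsilon)\card{F_j}$. This is exactly fiberwise supramenability in Definition~\ref{def:fs}, and it uses only pointwise growth. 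Proposition~\ref{prop:pointwise and uniform supramen equivalent} then upgrades it to the uniform version if needed.

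\emph{Conclusion.} Either way, $\CG$ is fiberwise supramenable: via Proposition~\ref{prop:growth} in the first route, or directly in the second. Since $\CG$ is ample with compact unit space, Theorem~\ref{thm:A} applies and gives that $S(\CG)$ is almost unperforated.
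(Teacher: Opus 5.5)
Your proposal is correct and follows the paper, which obtains the corollary directly from Proposition~\ref{prop:growth} and Theorem~\ref{thm:A}, implicitly using Remark~\ref{rmk: pointwise subexponential} to pass from pointwise to uniform growth. Your second route is also valid and slightly cleaner. The chain $F_j=E\cap K^j\gamma$ only needs a bound on the single fiber $\CG_{r(\gamma)}$, and Theorem~\ref{thm:A} only requires non-uniform fiberwise supramenability, so this route avoids the Remark, whose proof the paper does not give.
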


\section{Topological amenability and finite-order bisections}\label{sec:top}

In this section we  set $P=\langle\alpha\mid\alpha^3=e\rangle$,
$Q=\langle\beta\mid\beta^2=e\rangle$, and $\Pi=P*Q$.
We use two consequences of the matching results in \cite{GL}.

\begin{lemma}[{\cite[Lemma 4.1]{GL}}]\label{lem:gl-amenable}
If $\Pi\curvearrowright Z$ is a topologically amenable action on a compact
zero-dimensional space and $P$ acts freely, then a clopen $C_Z\subseteq Z$
exists with
\begin{equation}\label{eq: CZ property}
     Z=C_Z\sqcup\alpha C_Z\sqcup\alpha^2C_Z,
 \qquad C_Z\cap\beta C_Z=\varnothing.
\end{equation}
\end{lemma}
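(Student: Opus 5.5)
The plan is to recast the statement as a Hall-type matching problem on the Bass--Serre tree of $\Pi=P*Q$, and then use topological amenability to solve it clopenly. Since $P$ acts freely, each $P$-orbit $\{z,\alpha z,\alpha^2z\}$ has exactly three points. The quotient $Z/P$ is again compact and zero-dimensional, because a free action of a finite group admits a clopen fundamental domain locally. Form the bipartite multigraph with left vertices the $P$-orbits and right vertices the $Q$-orbits $\{z,\beta z\}$. Each $z\in Z$ is an edge joining its $P$-orbit to its $Q$-orbit.

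A set $C_Z$ as in \eqref{eq: CZ property} is exactly a clopen matching that covers every left vertex. Covering each left vertex once means choosing one point from each $P$-orbit, so $Z=C_Z\sqcup\alpha C_Z\sqcup\alpha^2C_Z$. Using each right vertex at most once means $C_Z\cap\beta C_Z=\varnothing$; note that a $\beta$-fixed point is automatically allowed. Left degrees are $3$ and right degrees are at most $2$, so the graph satisfies \eqref{eq:strict-hall} with $\lambda=3/2$. The graph is finitely clopen presented via the charts $\alpha^{\pm1}$ and $\beta$ on clopen pieces. So Lemma~\ref{lem:augmentation} applies and produces clopen matchings $M_h$ with no augmenting path of length at most $2h-1$.

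The difficulty is that the fiberwise argument of Theorem~\ref{thm:hall} fails here. The orbits of $\Pi$ have exponential growth, so expansion with ratio $3/2$ is no contradiction. Amenability must therefore enter through the space rather than through the fibres.

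I would use amenability geometrically. Topological amenability of $\Pi\curvearrowright Z$ gives continuous maps $z\mapsto\mu^n_z\in\Prob(\Pi)$ that are approximately equivariant, uniformly on the generators. Pushing these to the Bass--Serre tree $T$ gives a continuous, approximately equivariant assignment $z\mapsto\nu^n_z\in\Prob(T)$. Here $T$ has vertices the cosets of $P$ and $Q$, with triangle-vertices of degree $3$ and edge-vertices of degree $2$.

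For a $P$-orbit, viewed as a degree-$3$ vertex of $T$, look at the three branches of $T$ at that vertex. I would choose the point $z$ of the orbit whose branch carries the largest $\nu^n$-mass. With exact equivariance this selection is consistent: if $z$ is chosen, its branch points through the $Q$-vertex $\{z,\beta z\}$. At the neighbouring triangle through $\beta z$, the heavy branch is then the one containing the original triangle, which is not the branch through $\beta z$ itself unless the mass splits evenly. Hence $\beta z\notin C_Z$. To make the choice clopen, I would replace ``largest'' by a tie-broken rule. I would discretize the branch masses with finitely many thresholds and fix a clopen partition of $Z/P$ on which the ordering is locally constant; this uses continuity of $z\mapsto\nu^n_z$ and compactness.

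With only approximate equivariance, a conflict $z,\beta z\in C_Z$ can occur only where the masses of two branches nearly tie, or where the approximate invariance fails at scale $1/n$. For large $n$ I would show that the set of such conflicting configurations is small and far apart in the orbit graph. The small part should come from the $\ell^1$-estimate on translates; the far-apart part should come from choosing the thresholds generically and iterating over a finite ball. I would then delete the conflicts to obtain a partial clopen matching $M_0$ with sparse defects. Finally I would run Lemma~\ref{lem:augmentation} starting from $M_0$. Near each isolated unmatched triangle, the $3/2$-expansion of the alternating tree inside a bounded ball produces an augmenting path of bounded length, so for $h$ large enough $M_h$ covers every left vertex.

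The main obstacle is the last step: producing exact consistency rather than approximate consistency. Amenability only ever gives ``most'' points matched. I must show quantitatively that the residual defects are isolated enough for augmenting paths of uniformly bounded length to exist, and that this can be done clopenly. If this sparsity argument fails, my fallback is to replace approximate means by a continuous family of finite subtrees (approximate boundary points), which is the other standard form of amenability for actions of virtually free groups. I would then orient each triangle toward its subtree's ``end'' and repair inconsistencies by a finite local recolouring.
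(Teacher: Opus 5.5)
The paper gives no proof of this statement; it is quoted from \cite[Lemma 4.1]{GL}. Judged on its own, your proposal has a genuine gap exactly at the step you call the main obstacle, and the repair you propose there cannot work.

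The underlying problem is that in a tree-like orbit graph a defect may admit no augmenting path at all. Suppose an unmatched triangle $v$ has all of its neighbouring triangles pointing toward $v$ (a ``sink''). Then every nearby $Q$-orbit is matched, so there is no augmenting path from $v$ of \emph{any} length. The $3/2$-expansion is fully compatible with this, as you yourself noted when explaining why Theorem~\ref{thm:hall} does not transfer.

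The heaviest-branch rule produces exactly such sinks, even with a single isolated defect. Take $Z=\Pi\sqcup\partial\Pi$ (finite and infinite reduced words, left action; $P$ and $Q$ act freely). Let $\mu_g$ be the uniform measure on the prefixes of $g$ of lengths $\min(i,|g|)$, $0\le i<R$. This is locally constant with $\|s\mu_g-\mu_{sg}\|_1\le 2/R$, and in the orbit of $e$ its mass lies on the geodesic from $g$ back to $e$. The heaviest-branch rule then selects exactly the words beginning with $\beta$, except at the root triangle $\{e,\alpha,\alpha^2\}$. There every branch has mass $0$, and any choice creates a conflict. After deleting that conflict you have a clopen partial matching with a single unmatched triangle, while every $Q$-orbit $\{h,\beta h\}$ is already matched. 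So Lemma~\ref{lem:augmentation} never covers it.

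Two further claims are also false. First, conflicts are not confined to near-ties. Take identical measures giving the two sides of an edge masses $0.45$ and $0.55$, the latter split evenly between the other two branches at its endpoint; both endpoints then choose that edge. Second, a $\beta$-fixed point is never admissible in $C_Z$, since $z=\beta z$ lies in $C_Z\cap\beta C_Z$.

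The missing idea is to select a \emph{light} branch rather than the heaviest; then no repair is needed at all. Fix $\xi\in\partial\Pi$ and set $\nu_z=\sum_g\mu_z(g)\delta_{g\xi}\in\Prob(\partial\Pi)$. Then $z\mapsto\nu_z$ is norm-continuous, and $\|s\nu_z-\nu_{sz}\|\le\|s\mu_z-\mu_{sz}\|_1<\varepsilon$ for $s\in\{\alpha,\alpha^2,\beta\}$ (total variation norm). Let $E_\beta,E_\alpha,E_{\alpha^2}$ be the cylinders of infinite words beginning with $\beta,\alpha,\alpha^2$. They partition $\partial\Pi$, $\alpha$ permutes them cyclically, and $\beta E_\beta=\partial\Pi\setminus E_\beta$.

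For $f(z)=\nu_z(E_\beta)$ this gives $|f(z)+f(\alpha z)+f(\alpha^2z)-1|\le2\varepsilon$ and $|f(z)+f(\beta z)-1|\le\varepsilon$. If $\varepsilon<1/10$, the open set $\{f<2/5\}$ meets every $P$-orbit. Using a clopen fundamental domain for the free $P$-action and zero-dimensionality, choose a clopen $C_Z\subseteq\{f<2/5\}$ meeting each $P$-orbit exactly once. Then $z,\beta z\in C_Z$ would force $f(z)+f(\beta z)<4/5$, which is impossible. In particular, $\beta$-fixed points, where $f\approx1/2$, are excluded automatically.

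The reason this works is that the two sides of an edge partition all the mass. Both endpoints cannot see \emph{little} mass across the edge, although they can both see a lot. This gives the exact consistency you were missing, and it makes the Hall/augmentation machinery unnecessary for this lemma.
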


The following is precisely the combination of \cite[Lemma 3.4 and Proposition 3.3]{GL}.

\begin{lemma}\label{lem:gl-completion}
Let $\Pi$ act on a compact zero-dimensional space $Y$. Suppose that $F$ is
clopen and $P$-invariant, that $P$ acts freely on $F$ and fixes $Y\setminus F$
pointwise, and that $Q$ acts freely on $Y$. In the incidence graph
\[
 L=F/P,\qquad R=Y/Q,\qquad E=F,
\]
which is defined by $s(y)=Py$ and $r(y)=Qy$ for any $y\in F$,
let $M_0$ be a clopen matching. If each unmatched left vertex has a
representative fixed by a nonidentity element of $\Pi$, then there is a
clopen matching covering $L$.
\end{lemma}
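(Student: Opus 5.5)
Since the paper says this lemma is exactly \cite[Lemma 3.4 and Proposition 3.3]{GL} combined, I would reproduce that argument in the present notation, reusing the compactness-and-limit-matching scheme from the proof of Theorem~\ref{thm:hall}.

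\textbf{Setup and limit matching.} The incidence graph $(F/P, Y/Q, F)$ has a finite clopen presentation: $F/P$ and $Y/Q$ are compact zero-dimensional because $P$ and $Q$ are finite groups acting freely on clopen sets. Each left vertex $Py$ has degree exactly $3$, with edges $y,\alpha y,\alpha^2 y$. Each right vertex $Qy$ has degree at most $2$, and degree $1$ exactly when one of $y,\beta y$ lies outside $F$. Apply Lemma~\ref{lem:augmentation} starting from $M_0$. This gives increasing clopen matchings $M_h$ with no augmenting paths of length at most $2h-1$. If none covers $L$, the decreasing compact sets $L\setminus\dom M_h$ share a point $v$. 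As in the proof of Theorem~\ref{thm:hall}, a diagonal subsequence converges on the countable component $\CC$ of $v$ to a matching $M$. In $M$ the vertex $v$ is unmatched and there is no finite augmenting path. Since $\dom M_0\subseteq\dom M_h$, the vertex $v$ is unmatched for $M_0$. By hypothesis, $v=Py_0$ with $gy_0=y_0$ for some $e\neq g\in\Pi$.

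\textbf{Geometry of the component.} Let $T$ be the set of left vertices reachable from $v$ by alternating walks. As in Theorem~\ref{thm:hall}, every vertex of $N(T)$ is matched and $M^{-1}(N(T))\subseteq T$. The component $\CC$ is a Schreier-type graph of the $\Pi$-orbit of $y_0$. If the orbit were free and stayed inside $F$, this graph would be the Bass--Serre tree of $P*Q$: left vertices of degree $3$, right vertices of degree $2$. The nontrivial stabiliser $g$ of $y_0$ forces one of two defects in $\CC$. Either (a) the orbit meets $Y\setminus F$, which produces a right vertex of degree $1$; or (b) a reduced word for $g$ closes up to a genuine cycle in the graph through the vertex $v$.

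\textbf{Contradiction by counting.} I would now run a deficiency count on the alternating tree grown from $v$. Explore $T$ breadth first. Each newly reached left vertex $x\neq v$ is entered through its matched edge and contributes $2$ fresh outgoing unmatched edges. Each such edge leads to a right vertex, which must be matched and whose match is another left vertex in $T$. In a tree with no defects, this exploration only closes up consistently because $v$ itself has $3$ outgoing edges but no matched edge. The extra edge at $v$ has to be absorbed somewhere. A degree-$1$ right vertex (case (a)) or a cycle (case (b)) lets one reroute the matching along the corresponding alternating path. This produces either a finite augmenting path or an alternating cycle through $v$ that can be flipped to match $v$. Either outcome contradicts the properties of $M$ in the limit, hence the properties of $M_h$ for large $h$. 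Therefore some $M_h$ covers $L$.

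\textbf{Main obstacle.} The hard step is the last one: turning the algebraic fact that $g$ fixes $y_0$ into an actual finite augmenting path for $M$. The difficulty is that the cycle coming from $g$ need not be alternating with respect to $M$. My first attempt would be the GL device: take the reduced word of $g$, follow its letters from $v$, and at each right vertex choose the edge that keeps the walk alternating. The freeness of $P$ and $Q$ should guarantee that the walk either closes up or exits $F$ within $|g|$ steps. Only finitely many edge coordinates are involved, so the contradiction transfers back to the finite stage $M_h$.
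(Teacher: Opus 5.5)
There is a genuine gap, at the step you flag yourself as the main obstacle. Up to the limit matching your reduction is sound, and it is a reasonable way to reconstruct what the paper simply imports from \cite{GL} without proof. That part consists of applying Lemma~\ref{lem:augmentation} from $M_0$, taking the limit $M$ on the component $\CC$ of a never-matched vertex $v$, and noting that $v=Py_0$ with $gy_0=y_0$ for some $g\neq e$. What is missing is any argument that $M$ actually has a finite augmenting path, and the mechanisms you sketch would not give one:
\begin{enumerate}
\item An alternating cycle cannot pass through the unmatched vertex $v$, since every vertex of an alternating cycle is matched. Flipping such a cycle never changes the set of matched vertices, so it cannot ``match $v$''.
\item Following the letters of $g$ and ``choosing at each right vertex the edge that keeps the walk alternating'' is not meaningful. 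A right vertex has degree $\leq 2$ and is left through its matched edge, so there is no choice there. Meanwhile the edges prescribed by the letters of $g$ have no reason to alternate with respect to $M$.
\item The defect forced by $g$ need not lie on a cycle through $v$. For $g=wcw^{-1}$ it sits at the far end of a stem.
\end{enumerate}
Your breadth-first ``deficiency count'' also never shows that the alternating exploration from $v$ reaches the defect. In the $(3,2)$-biregular tree that exploration is infinite and perfectly consistent, so a purely local count yields no contradiction.

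The missing idea is that, absent augmenting paths, the alternating exploration from $v$ must swallow the whole component. Orient unmatched edges from left to right and matched edges from right to left, and let $X$ be the set of vertices reachable from $v$.
\begin{enumerate}
\item A reachable right vertex $w$ is matched, since otherwise a walk to it shortens to an augmenting path. It is entered by at least one unmatched edge and leaves through its matched edge. Since $\deg(w)\leq 2$, it has degree exactly $2$, in-degree exactly $1$, and both neighbours in $X$.
\item A reachable left vertex has all three neighbours in $X$.
\item Hence $X$ is closed under adjacency, so $X=\CC$. Every vertex other than $v$ has in-degree exactly one, so depth (the length of the unique backward path to $v$) is well defined.
\item Every edge $a\to b$ then satisfies $\mathrm{depth}(b)=\mathrm{depth}(a)+1$. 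On any cycle, including a double edge, the deepest vertex would need two in-edges.
\item So $\CC$ has no cycles, no multiple edges and no right leaves; that is, it is the Bass--Serre tree of $P*Q$.
\end{enumerate}
This contradicts $gy_0=y_0$. Write $g=wcw^{-1}$ with $c$ cyclically reduced, and walk from $y_0$ along $w^{-1}$ and then along $c$. Either the walk leaves $F$ at some $\beta$-step, which gives a right vertex of degree one in $\CC$. Or it stays in $F$; then freeness of $P$ on $F$ and of $Q$ on $Y$ forces $c\notin P\cup Q$, and the walk along $c$ traces a closed non-backtracking walk of length $\geq 2$, i.e.\ a cycle or a double edge. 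With this in place, the rest of your argument goes through.
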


\subsection{The groupoid on the set with trivial pointwise kernel}

For a compact open full bisection $p$ over $Y$, write $\theta_p$ for its
induced homeomorphism, i.e the corresponding element in the topological full group $[[\CG|_Y]]$. 

\begin{prop}\label{prop:bisection-selection}
Let $\mathcal G$ be a topologically amenable ample Hausdorff
groupoid, and let $Y\subseteq\mathcal G^{(0)}$ be compact open. Suppose $p,q$
are compact open full bisections of $\mathcal G|_Y$ satisfying
\[
 p^3=1_Y,\qquad q^2=1_Y.
\]
Let $F\subseteq Y$ be clopen and $\theta_p$-invariant. Suppose every
$\theta_p$-orbit in $F$ has three points, $p|_{Y\setminus F}=1_{Y\setminus F}$,
and $\theta_q$ has no fixed point on $Y$. Then there is a clopen $C\subseteq F$ with
\[F=C\sqcup\theta_p(C)\sqcup\theta_p^2(C),\qquad
 C\cap\theta_q(C)=\varnothing.
\]
\end{prop}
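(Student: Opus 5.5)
The plan is to reduce to the two results imported from \cite{GL}. The compact open full bisections $p,q$ with $p^3=1_Y$ and $q^2=1_Y$ give homeomorphisms $\theta_p,\theta_q$ of $Y$ with $\theta_p^3=\id$ and $\theta_q^2=\id$. Hence $\alpha\mapsto\theta_p$, $\beta\mapsto\theta_q$ defines an action $\Pi\curvearrowright Y$. The hypotheses of Lemma~\ref{lem:gl-completion} then hold verbatim: $F$ is clopen and $P$-invariant, $P$ acts freely on $F$ (orbits of size three) and fixes $Y\setminus F$ pointwise, and $Q$ acts freely because $\theta_q$ has no fixed point.

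In the incidence graph $L=F/P$, $R=Y/Q$, $E=F$, a clopen matching covering $L$ picks, in each $P$-orbit of $F$, exactly one point $y$ (the matched edge). Distinct chosen points lie in distinct $Q$-orbits. Letting $C$ be the set of chosen points, we get $F=C\sqcup\theta_p(C)\sqcup\theta_p^2(C)$ and $C\cap\theta_q(C)=\varnothing$; clopenness of $C$ follows from clopenness of the matching. So the whole task is to produce such a matching. By Lemma~\ref{lem:gl-completion}, it suffices to find a clopen matching $M_0$ such that every unmatched left vertex has a representative fixed by a nonidentity element of $\Pi$.

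To get $M_0$, I would use topological amenability of $\CG$ and Lemma~\ref{lem:gl-amenable}. The action $\Pi\curvearrowright Y$ need not be topologically amenable, since points with nontrivial stabilizers in $\Pi$ obstruct it. The idea is that amenability of $\CG|_Y$ still transfers to the action on the free part. Consider the map $\Pi\times Y\to\CG|_Y$ sending $(g,y)$ to the arrow obtained by composing the corresponding bisection words at $y$. Pull back an approximately invariant continuous mean $\lambda_i$ of $\CG|_Y$ (amenability passes to reductions to compact open sets) along this map. Near points $y$ where $g\mapsto$ arrow is injective (trivial ``pointwise kernel'', as the subsection title suggests), this gives approximately invariant means on $\Pi$. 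Set $Z$ to be the closed $\Pi$-invariant subset of $F$ where the map $\Pi\to\CG_y$ is injective (closed by continuity and compactness of the bisections). Then $\Pi\curvearrowright Z$ is topologically amenable with $P$ acting freely, and Lemma~\ref{lem:gl-amenable} gives a clopen $C_Z\subseteq Z$ with \eqref{eq: CZ property}. This is a matching on $Z/P$.

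Next, extend $C_Z$ to a clopen subset of $F$. Zero-dimensionality and compactness allow me to choose a clopen $C'\subseteq F$ with $C'\cap Z=C_Z$ such that $C',\theta_p C',\theta_p^2C'$ are disjoint and $C'\cap\theta_qC'=\varnothing$, since these are open conditions holding on $Z$. Shrinking to a clopen $\theta_p$-saturated neighborhood $W$ of $Z$ makes $C'\cap W$ a clopen matching $M_0$ covering $W/P$. Unmatched left vertices then lie in $F\setminus W\subseteq F\setminus Z$. There some nonidentity $g\in\Pi$ has its word evaluate to the unit at $y$, but that only means $g$ fixes $y$ if the word lands in $\GU$; isotropy of $\CG$ complicates this.

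The main obstacle is this step: relating the ``kernel'' of $\Pi\to\CG_y$ to genuine fixed points of $\theta_g$, and checking that the pulled-back means really are approximately invariant on $Z$ (continuity of the fiberwise pushforward). I would first define $Z$ as the set where all $\theta_g$, $g\ne e$, are fixed-point free, rather than via injectivity. I would then verify amenability of $\Pi\curvearrowright Z$ by averaging $\lambda_i$ over the finite-to-one map from $\Pi y$ into $\CG_y$, which is injective on $Z$ by freeness.
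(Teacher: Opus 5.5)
Your skeleton matches the paper's. You take $Z$ to be the set where $w\mapsto b_w(y)$ is injective, which is exactly the paper's $Z=\{y:\Lambda_y=\{e\}\}$. You apply Lemma~\ref{lem:gl-amenable} on $Z$, extend $C_Z$ clopenly, and finish with Lemma~\ref{lem:gl-completion}. Your saturated clopen $W$ is a harmless variant of the paper's $C_0=C'\setminus(\theta_pC'\cup\theta_p^2C'\cup\theta_qC')$.

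\textbf{Gap in the amenability step.} The one step that needs an idea is topological amenability of $\Pi\curvearrowright Z$, and you do not supply it. Pulling back approximately invariant means of $\CG|_Y$ along $g\mapsto b_g(y)$ does not produce probability measures on $\Pi$: $\lambda_i^y$ may put most or all of its mass off the image of $\Pi$. The paper argues as follows.
\begin{enumerate}
\item Set $H=\bigcup_{w\in\Pi}B_w$. This is an open subgroupoid of $\CG|_Y$, hence amenable by permanence.
\item $Z$ is closed and $H$-invariant, so $H|_Z$ is amenable.
\item The map $\Phi\colon Z\rtimes\Pi\to H|_Z$, $(w\cdot z,w,z)\mapsto b_w(z)$, is an isomorphism of topological groupoids. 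It is multiplicative by the cocycle identity $b_{uv}(y)=b_u(v\cdot y)b_v(y)$, injective precisely because $\Lambda_z=\{e\}$, and a homeomorphism on each slice.
\end{enumerate}
No means are pulled back by hand. The $\Pi$-stabilizers on $Z$ need not be trivial; $H|_Z$ simply carries the corresponding isotropy, and this does not obstruct amenability.

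\textbf{Your final plan fails.} Redefining $Z$ as the free part $\{y:\theta_g(y)\neq y \text{ for all } g\neq e\}$ breaks the argument. Each set $\{y:\theta_g(y)=y\}=s(B_g\cap\opIso(\CG))$ is closed but in general not open, so the free part need not be closed. Then Lemma~\ref{lem:gl-amenable}, which needs a compact space, does not apply, and $C_Z$ cannot be extended clopenly.

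Concretely, take $\CG=\partial\Pi\rtimes\Pi$ with the canonical bisections $p,q$ and $Y=F=\partial\Pi$. All hypotheses hold, and the paper's $Z$ is all of $Y$. But fixed points of hyperbolic elements are dense in $\partial\Pi$, so the free part is a proper dense subset and is not closed.

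\textbf{The obstacle that prompted the switch is not real.} $b_g(y)=y$ says the arrow is the unit $y$, so $\theta_g(y)=r(b_g(y))=y$. Hence every point off the injectivity set is fixed by a nonidentity element, which is all Lemma~\ref{lem:gl-completion} needs. Only the converse fails in the presence of isotropy, and it is never used.

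Finally, the injectivity set is closed not ``by continuity and compactness''. It is closed because its complement $\bigcup_{g\neq e}s(B_g\cap\GU)$ is open, since $\GU$ is clopen in the \'etale groupoid $\CG$.
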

\begin{proof}
The bisection identity $p^3=q^2=1_Y$ yields a homomorphism
$\varphi: w\mapsto B_w$ from $\Pi=P*Q=\langle \alpha\rangle*\langle \beta\rangle$ to the topological full group $[[\CG\mid_Y]]$, with
$B_\alpha=\theta_p$ and $B_\beta=\theta_q$. Set
\[
 H=\bigcup_{w\in\Pi}B_w.
\]
Since $B_uB_v=B_{uv}$ and $B_w^{-1}=B_{w^{-1}}$, this is a subgroupoid
with unit space $Y$. It is open in $\mathcal G|_Y$ and hence topologically
amenable. Let $b_w(y)\in B_w$ be the unique element whose source is $y$. Note that the homomorphism $\varphi$ yields an action of $\Pi$ on $Y$ by $w\cdot y=r(b_w(y))$. Then the following cocycle equality holds.
\begin{equation}\label{eq:bisection-cocycle}
 b_{uv}(y)=b_u(v\cdot y)b_v(y).
\end{equation}

Define
\[
 \Lambda_y=\set{w\in\Pi:b_w(y)=y},\qquad
 Z=\set{y\in Y:\Lambda_y=\set e}.
\]
Note that $b_w(y)=y$ means $y\in B_w$, which implies $w\cdot y=y$ and $y\in B_{w^{-1}}$. Combining with  the cocycle equality \eqref{eq:bisection-cocycle}, one has $\Lambda_y$ is a subgroup. For $w\ne e$, the set
$\{y\in Y:b_w(y)=y\}=B_w\cap H^{(0)}$ is clopen. Consequently, 
\[Z=Y\setminus \bigcup_{w\ne e} \{y\in Y: b_w(y)=y\}\]
is closed. In addition, 
formula~\eqref{eq:bisection-cocycle} shows
$\Lambda_{v\cdot y}=v\Lambda_yv^{-1}$, from which one obtains that $Z$ is $\Pi$-invariant and thus
$H$-invariant. For any $y\notin F$. Since $\theta_p$ is the identity map on $Y\setminus F$, one has $b_\alpha(y)=y$, which implies that $y\notin Z$ and thus $Z\subseteq F$.

We now define an isomorphism of topological groupoids $ \Phi:Z\rtimes \Pi\longrightarrow H|_Z$ defined by
\begin{equation}\label{eq:kernel-isomorphism}
 \Phi(w\cdot z,w,z)=b_w(z).
\end{equation}
The cocycle identity \ref{eq:bisection-cocycle} shows that $\Phi$ preserves multiplicativity. Surjectivity of $\Phi$ follows from the
definition of $H=\bigcup_{w\in \Pi}B_w$. To show the injectivity of $\Phi$, suppose $b_u(z)=b_v(z)$. Note that $b_v(z)^{-1}=b_{v^{-1}}(v\cdot z)$ by definition. This then implies that
$b_{v^{-1}u}(z)=b_v(z)^{-1}b_u(z)=z$ and thus $u=v$ by the definition of
$Z$. Each slice $(Z\times\set w\times Z)\cap (Z\rtimes \Pi)$ maps homeomorphically onto
$B_w\cap (H|_Z)$, an open bisection of the reduction. Hence the bijection $\Phi$ is
a homeomorphism.

The invariant closed reduction $H|_Z$ is amenable. By
\eqref{eq:kernel-isomorphism}, the action $\Pi\curvearrowright Z$ is
topologically amenable. The $P$-action on $Z$ is free because $Z\subseteq F$.
Apply Lemma~\ref{lem:gl-amenable}, to obtain a relatively clopen $C_Z$ in $Z$ satisfying the property \eqref{eq: CZ property} in  Lemma~\ref{lem:gl-amenable}. If $Z$ is empty, simply take
$C_Z=\varnothing$.

Extend $C_Z$ to a clopen subset $C'\subseteq F$ such that $C'\cap Z=C_Z$. Such an extension exists
compact sets $C_Z$ and $Z\setminus C_Z$ are thus closed in the clopen set $F$. 
Define
\[
 C_0=C'\setminus\bigl(\alpha(C')\cup\alpha^2(C')\cup\beta_q(C')\bigr).
\]
Then $C_0$ meets every $P$-orbit and every $Q$-orbit in at most one point because by definition,
\[C_0\cap \alpha(C_0)=C_0\cap \alpha^2(C_0)=C_0\cap \beta(C_0)=\emptyset.\]
In addition, one still has $C_0\cap Z=C_Z$. Indeed, since $Z$ is $\Pi$-invariant, one has $\alpha(C')\cap Z=\alpha(C_Z)$, $\alpha^2(C')\cap Z= \alpha^2(C_Z)$, and $\beta(C')\cap Z=\beta(C_Z)$. Then properties \eqref{eq: CZ property} for $C_Z$ shows that $C_0\cap Z=C'\cap Z=C_Z$.
Consequently $C_0$
is a clopen partial matching in the incidence graph $F/P$--$Y/Q$ that covers
every left vertex $Pz$ represented by a $z\in Z$.

Any unmatched left vertex has a representative $y\notin Z$. There is
a $w\ne e$ with $b_w(y)=y$, and hence $wy=y$. It then follows from Lemma~\ref{lem:gl-completion} that there exists a clopen matching covering $L=F/P$. 
Its clopen edge set $C\subseteq F$ meets
each $P$-orbit once and each $Q$-orbit at most once, as required. Equivalently, one has $F=C\sqcup \alpha(C)\sqcup \alpha^2(C)$ and $C\cap \beta(C)=\emptyset$.
\end{proof}

Now we are ready to prove Theorem~\ref{thm:B}.
\begin{proof}[Proof of Theorem~\ref{thm:B}]
Suppose $3a\leq2b$. If $a=0$, there is nothing to prove. Otherwise choose
representatives and a sufficiently large finite amplification
$\mathcal G=G\times\mathcal R_d$ in which the inequality is witnessed.
Let $B_0,B_1$ be disjoint copies of a representative of $b$, define
$Y=B_0\sqcup B_1$. In addition, because $3a\leq 2b$, without loss of  generality, one may assume there exists disjoint $A_0,A_1,A_2\subseteq Y$ as three copies of a representative of $a$. Put
$F=A_0\sqcup A_1\sqcup A_2$.

Since $[A_0]=[A_1]=[A_2]=a$, there are compact open bisections $t_0, t_1$ with $s(t_0)=A_0$, $r(t_0)=A_1$ and $s(t_1)=A_1$, $r(t_1)=A_2$.
 Set
$t_2=(t_1t_0)^{-1}$ and define
\[
 p=t_0\sqcup t_1\sqcup t_2\sqcup1_{Y\setminus F}.
\]
Similarly, there exists compact open bisection $\tau$ with $s(\tau)=B_0$ and $r(\tau)=B_1$. Define $q=\tau\sqcup\tau^{-1}$ and thus
$q^2=1_Y$.

The groupoid $\mathcal G$ is topologically amenable. It follows from
Proposition~\ref{prop:bisection-selection} to obtain a clopen set $C\subseteq F$ such that
\[C\sqcup \theta_p(C)\sqcup \theta^2_p(C)=F=A_0\sqcup A_1\sqcup A_2.\]
and $C\cap\theta_q(C)=\varnothing$. We claim that $[C]=a$. Indeed, define $C_i=C\cap A_i$ for $i=0,1,2$ and therefore $C=C_0\sqcup C_1\sqcup C_2$. For any $x\in A_0$, there is only one point among  $x, \theta_p(x), \theta^2_p(x)$ belonging to $C$. Let $\theta_p^i(x)\in C$ for some $i=0, 1,2$. Then by our definition, note that $\theta_p^i(x)\in A_i$ as well. Therefore, one has $x\in (\theta_p^i)^{-1}(C_i)$. This implies that
\[A_0=C_0\sqcup \theta_p^{-1}(C_1)\sqcup\theta_p^{-2}(C_2)\]
and thus 
\[[C]=[C_0\sqcup C_1\sqcup C_2]=[A_0]=a.\]

Then define $D=(C\cap B_0)\sqcup \theta_q(C\cap B_1)$, which is a subset of $B_0$. Note that this map keeps $C\cap B_0$ fixed and move $C\cap B_1$ into $B_0$ using $q$. Therefore, one has 
\[a=[C]=[D]\leq [B_0]=b.\]

If $\CG$ is minimal, then $S(\CG)$ is a simple conical refinement monoid by Remark \ref{rmk: type semigroup conical and simple}. Then it follows from Lemma~\ref{lem:three-algebra} that $S(\CG)$ is almost unperforated. Then if $\CG$ is  $\sigma$-compact, then it follows from \cite[Proposition 2.10]{ABBL} that $\CG$ has groupoid strict comparison.
\end{proof}

\section{Applications}\label{sec:app}
In this section, we record several applications of Theorem \ref{thm:A} and \ref{thm:B}. Matui's AH conjecture (see \cite[Conjecture 2.9]{Matui}) propose a natural connection between the first two homology group of a minimal ample groupoid and the abelianisation of its topological full groups. It follows from \cite[Corollary E]{Li} that the AH conjecture holds for any minimal ample groupoid $\CG$ satisfying groupoid strict comparison and containing no isolated points in the unit space $\GU$. Therefore, we have the following application in a straightforward way. Note that if $\CG$ is minimal and $\GU$ is compact and infinite, then necessarily $\CU$ is \textit{perfect} in the sense that there is no isolated points. See, e.g., \cite[Remark 8.10]{MW2}.  

\begin{thm}\label{thm: AH}
    Let $\CG$ be an ample minimal groupoid on an infinite compact unit space such that $\CG$ is fiberwise supramenable or $\CG$ is topological amenable. Then $\CG$ satisfies the AH conjecture.
\end{thm}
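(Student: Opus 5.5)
The plan is to reduce Theorem~\ref{thm: AH} to Li's criterion \cite[Corollary E]{Li}. That result states that a minimal ample groupoid with groupoid strict comparison and with a unit space having no isolated points satisfies the AH conjecture. So we need two facts about $\CG$: that it has groupoid strict comparison, and that $\GU$ is perfect.

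For perfectness, I would argue directly from minimality. Suppose $x\in\GU$ is isolated. Then $\{x\}$ is open, so its orbit $r(\CG x)=r(\CG_{\{x\}})$ is open and $\CG$-invariant. Its complement is therefore a closed invariant set, which by minimality is empty. Hence $\GU$ is a single orbit of an isolated point, so every point of that orbit is isolated (an orbit point $r(\gamma)$ has the neighborhood $r(U)$ for a bisection $U$ with $s(U)=\{x\}$). Compactness then forces $\GU$ to be finite, contradicting the assumption that $\GU$ is infinite. This is the observation recorded in \cite[Remark 8.10]{MW2}.

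For comparison, split into the two hypotheses. If $\CG$ is fiberwise supramenable, Theorem~\ref{thm:A} gives that $S(\CG)$ is almost unperforated. If $\CG$ is topologically amenable and minimal, Theorem~\ref{thm:B} gives the same conclusion. In either case, minimality together with \cite[Proposition 2.10]{ABBL} upgrades almost unperforation to groupoid strict comparison.

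The main obstacle is the countability hypothesis. Theorems~\ref{thm:A} and~\ref{thm:B} derive comparison only under $\sigma$-compactness, and Li's framework and the definition of the homology groups in the AH conjecture \cite{Matui} are formulated for second countable groupoids. I would therefore read the statement within the standard setting of the AH conjecture, where $\CG$ is second countable, hence $\sigma$-compact. This makes \cite[Proposition 2.10]{ABBL} applicable, and \cite[Corollary E]{Li} then yields the AH conjecture for $\CG$. If one wanted to avoid this assumption, one would have to check that Li's argument only uses almost unperforation of $S(\CG)$, or the ample form of comparison in the remark following Definition~\ref{defn:comparison}, rather than second countability. I would not pursue that here.
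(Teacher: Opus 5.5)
Your proposal is correct and follows the paper's own (very brief) argument: groupoid strict comparison from Theorem~\ref{thm:A} or Theorem~\ref{thm:B} via Proposition 2.10 of Ara--B\"onicke--Bosa--Li, perfectness of the unit space from minimality and infiniteness (the paper just cites Remark 8.10 of Ma--Wu, which you reprove directly), and then Li's Corollary E. You also point out that this tacitly needs $\sigma$-compactness (e.g.\ second countability, the usual setting of the AH conjecture), since Theorems~\ref{thm:A} and~\ref{thm:B} give strict comparison only under that hypothesis; this is a fair point that the paper's statement leaves implicit.
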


Another application is on regularity properties of groupoids and groupoid $C^*$-algebras. It was introduced by the second and third author in \cite{MW2} a new regularity property for a locally compact  Hausdorff groupoid $\CG$ with compact unit space called \textit{almost elementariess} and it was shown in \cite{MW2} that this propety implies that the reduced groupoid $C^*$-algebra $C^*_r(\CG)$ is tracially $\CZ$-stable. Then Theorem \ref{thm:A} and \ref{thm:B} shows that in many cases, almost elementariness is actually equivalent to the weaker condition called \textit{almost elementariness in measure}, which is easier to verify. See, e.g., \cite{M}. We refer to \cite[Section 7]{MW2} for definitions of almost elementariness and almost elementariness in measure. 

\begin{thm}\label{thm:AE}
    Let $\CG$ be an ample minimal groupoid on a compact unit space such that $\CG$ is fiberwise supramenable or $\CG$ is topological amenable. Then $\CG$ is almost elementary in measure if and only if $\CG$ is almost elementary.  
\end{thm}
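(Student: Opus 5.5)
The plan is to reduce Theorem~\ref{thm:AE} to known results from \cite{MW2} and to the comparison statements already obtained. One direction is formal: almost elementariness implies almost elementariness in measure, by \cite[Section 7]{MW2}, since the measure version only weakens the requirement $K \prec_{\CG} V$ to $\mu(K) < \mu(V)$ for all invariant measures. So the work is the converse.

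For the converse, I would invoke the result in \cite{MW2} that, for a minimal groupoid with compact unit space, almost elementariness in measure together with groupoid strict comparison (Definition~\ref{defn:comparison}) implies almost elementariness. The point is that the towers produced by the measure version have levels that are small in every $\mu \in M(\CG)$, and comparison upgrades such measure smallness to the subequivalence $\prec_{\CG}$ demanded in the definition of almost elementariness.

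It therefore suffices to know that $\CG$ has groupoid strict comparison. This follows from Theorem~\ref{thm:A} in the fiberwise supramenable case and from Theorem~\ref{thm:B} in the topologically amenable case, using minimality. The main obstacle is a mild one. Those theorems deliver comparison only under $\sigma$-compactness, via \cite[Proposition 2.10]{ABBL}, and this hypothesis is absent from the statement.

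My first attempt at handling this is to note that both hypotheses already give almost unperforation of $S(\CG)$. For ample groupoids, the comparison needed inside the proof of the \cite{MW2} equivalence only concerns compact open sets. I would then verify directly, from almost unperforation and the standard state-extension argument on $S(\CG)$, that $\mu(U) < \mu(V)$ for all $\mu$ implies $[U] \leq [V]$ for compact open $U, V$. That is the ample form of comparison recorded in the remark following Definition~\ref{defn:comparison}.

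If that fails, I would add $\sigma$-compactness as an implicit standing assumption, matching the hypothesis under which Theorems~\ref{thm:A} and~\ref{thm:B} yield comparison.
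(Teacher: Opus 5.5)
Your proposal is essentially the paper's proof, which just combines the comparison conclusion of Theorem~\ref{thm:B} with \cite[Theorem 8.13]{MW2}; you also correctly bring in Theorem~\ref{thm:A} for the fiberwise supramenable case, which the paper's one-line proof leaves implicit. Your $\sigma$-compactness concern is legitimate, because the paper silently uses the $\sigma$-compact clause of those theorems; your state-extension fix works if invariance is tested on open bisections (invariance on compact open bisections passes to open ones by inner regularity, and the remark after Definition~\ref{defn:comparison} reduces comparison to compact open sets), but with the Borel-bisection definition in Definition~\ref{def:invariant-measure} it is not clear without $\sigma$-compactness that state-induced measures lie in $M(\CG)$, so your fallback of assuming $\sigma$-compactness is the safe reading.
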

\begin{proof}
    This is a combination of Theorem \ref{thm:B} and \cite[Theorem 8.13]{MW2}.
\end{proof}

The final application is on the pure infiniteness of topological amenable groupoids and their reduced groupoid $C^*$-algebras.

\begin{thm}\label{thm: pure inf}
    Let $\CG$ be an ample minimal topological amenable groupoid on an infinite compact unit space. 
    \begin{enumerate}
        \item If there is no $\CG$-invariant probability measure on $\GU$, then $\CG$ is purely infinite.  
        \item If, in addition, $\CG$ is topological principal, then its reduced groupoid $C^*$-algebra $C^*_r(\CG)$ is unital, simple,  nuclear and purely infinite $C^*$-algebra.
        \item If $\CG$ is further assumed to be second countable, then $C^*_r(\CG)$ is a unital Kirchberg algebra satisfying the UCT.
    \end{enumerate}
\end{thm}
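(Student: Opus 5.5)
By Theorem~\ref{thm:B}, $\CG$ is minimal, topologically amenable and ample, so $S(\CG)$ is almost unperforated; if $\CG$ is $\sigma$-compact it also has groupoid strict comparison. I would deduce all three items from this, using known results that identify comparison together with absence of invariant measures with pure infiniteness.

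For (1) I would argue directly in the type semigroup. Since $M(\CG)=\emptyset$ and $S(\CG)$ is a simple conical refinement monoid (Remark~\ref{rmk: type semigroup conical and simple}), Tarski's theorem (in the form used in \cite{ABBL}) shows that the absence of nontrivial states forces $(n+1)a\leq na$ for some $n$, for every nonzero $a$. Indeed, a state on $(S(\CG),[\GU])$ is the same thing as a $\CG$-invariant probability measure on $\GU$. Almost unperforation then turns $(n+1)a\le na$ into $2a\le a$. Applying this to $a=[U]$ for a nonempty compact open $U\subseteq\GU$ and unwinding \eqref{eq:subequiv}, $U$ is paradoxical in the sense of \cite{Mpure}. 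I will have to make sure the witnessing bisections lie in $\CG$ rather than in an amplification. Since $2[U]\le[U]$ with $[U]$ a single-level class, one obtains disjoint compact open $U_1,U_2\subseteq U$ each equivalent to $U$ via bisections of $\CG$; this is routine, because the range can be taken inside $U\times\{1\}$. This is exactly pure infiniteness of $\CG$ as defined in \cite{Mpure}, or equivalently the notion in \cite{MW2}. No $\sigma$-compactness is needed, since we never pass through measures on open sets.

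For (2), I would use that topological amenability gives nuclearity of $C^*_r(\CG)$ and $C^*_r(\CG)=C^*(\CG)$. Minimality together with topological principality gives simplicity, by the standard Renault / Brown--Clark--Farthing--Sims criterion, and unitality follows from compactness of $\GU$. Pure infiniteness of $C^*_r(\CG)$ then follows from (1) by \cite{Mpure}, where a purely infinite minimal topologically principal ample groupoid is shown to have a purely infinite reduced $C^*$-algebra: every nonzero hereditary subalgebra contains a projection $1_U$, which is properly infinite by paradoxicality. One wrinkle is that (2) as stated does not explicitly assume that no invariant measure exists. I would read it as adding to (1)'s hypothesis ("in addition"), so that the no-invariant-measure assumption is carried over from (1).

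For (3), second countability makes $C^*_r(\CG)$ separable. Tu's theorem then gives the UCT for amenable groupoids, so $C^*_r(\CG)$ is a unital separable simple nuclear purely infinite algebra satisfying the UCT, i.e.\ a unital UCT Kirchberg algebra. The main obstacle is the Tarski step in (1): I must be sure that, for a simple conical refinement monoid with no states, almost unperforation yields $2a\le a$ for all nonzero $a$. This follows from \cite[Proposition 2.20]{Melleray} / \cite{Wehrung} via Goodearl--Handelman, combined with almost unperforation.
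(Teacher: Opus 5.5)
Your proposal is correct. For items (2) and (3) it follows the paper: pure infiniteness of $C^*_r(\CG)$ comes from \cite{Mpure} (the paper cites Corollary 1.1 there), nuclearity from amenability, simplicity from minimality and topological principality, and the UCT from Tu's theorem. Your reading of (2) as cumulative, so that it keeps the no-invariant-measure hypothesis, is the intended one.

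The difference is in (1). The paper combines Theorem~\ref{thm:B} with \cite[Theorem C]{Mpure}, which it uses as a black box turning comparison together with $M(\CG)=\emptyset$ into pure infiniteness. You instead prove the needed implication inside $S(\CG)$:
\begin{itemize}
\item Tarski's theorem, plus simplicity (to move the normalization from $[\GU]$ to an arbitrary nonzero $a$), turns the absence of invariant measures into $(n+1)a\le na$.
\item Iterating gives $(n+1)(2a)\le na$.
\item Almost unperforation then yields $2a\le a$.
\end{itemize}
So every nonempty compact open $U$ is $(2,1)$-paradoxical via compact open bisections of $\CG$ itself.

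The paper's route is shorter. Yours is self-contained and needs only almost unperforation of $S(\CG)$, which Theorem~\ref{thm:B} supplies for minimal $\CG$ with no countability assumption. By contrast, if \cite[Theorem C]{Mpure} is applied through groupoid strict comparison, one needs the $\sigma$-compactness clause of Theorem~\ref{thm:B}, which the statement does not assume. Your route therefore justifies (1) at the stated generality.

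Two minor points of precision:
\begin{itemize}
\item Paradoxicality of all compact open sets via compact open bisections is best matched to pure infiniteness in \cite{Mpure} by citing the ample-case characterization given there. It need not be literally the definition.
\item In (2), topological principality gives $1_U\precsim a$. So a nonzero hereditary subalgebra contains a projection Murray--von Neumann equivalent to some $1_U$, not $1_U$ itself. Being equivalent to $1_U$, that projection is properly infinite, which is all you need.
\end{itemize}
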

\begin{proof}
    Theorem \ref{thm:B} implies that the $\CG$ is purely infinite by \cite[Theorem C]{Mpure}. If $\CG$ is topological principal, then it follows from \cite[Corollary 1.1]{Mpure} that $C^*_r(\CG)$ is unital, simple, nuclear purely infinite $C^*$-algebra. If $\CG$ is second countable, then by definition, it is a unital Kirchberg algebra satisfying the UCT.
\end{proof}

We finally remark that pure infiniteness is equivalent to almost elementariness of $\CG$ whenever $\CG$ is minimal and topological principal. Therefore, as an alternative approach, it follows from \cite[Theorem 13.11]{MW2} that $C^*_r(\CG)$ is $\CZ$-stable.

\section{Acknowledgement}
The original idea of fiberwise supramenability for groupoids came to the second author during his attendance in the workshop ``Cuntz semigroup and Dynamics'' in G\"{o}teborg, Sweden. The second author would like to thank the organizers for the invitation and the kind host for this stay. 

During the preparation of the manuscript, GPT 6 Astra was used for searching necessary references, examining mathematical arguments, and language editing.  The authors have checked and substantially revised all mathematical arguments generated by AI and take full responsibility for the final version.

\end{document}